\newtheorem{theorem}{Theorem}[section]
\newtheorem{lemma}[theorem]{Lemma}
\newtheorem{corollary}[theorem]{Corollary}
\newenvironment{questionrm}{\begin{question} \rm}{ \end{question}}
\newtheorem{question}[theorem]{Question}
\newtheorem{defi}[theorem]{Definition}
\newenvironment{definition}{\begin{defi} \rm}{ \end{defi}}
\newtheorem{exa}[theorem]{Example}
\newtheorem{notice1}{Observation}
\newtheorem{rem}[theorem]{Remark}
\renewcommand{\phi}{\varphi}
\newcommand{\set}[1]{\{{#1}\}}
\newcommand{\ol}[1]{\overline{#1}}
\newcommand{\bs}[1]{\boldsymbol{#1}}
\newcommand{\st}{\operatorname{St}}
\newcommand{\ang}[1]{\ensuremath{\langle} #1 \ensuremath{\rangle}}
\newcommand{\tie}{^\frown}
\newcommand{\ocwait}{\ensuremath{\mathrm{wait}}}
\newcommand{\ocstop}{\ensuremath{\mathrm{stop}}}
\newcommand{\ocinf}{\mathrm{inf}}
\newcommand{\oci}[1]{\infty_{#1}}
\newcommand{\card}[1]{\text{card}({#1})}
\title[Nondensity of the $s$-degrees of the $\Sigma^0_2$ sets]{The singleton 
degrees of the $\Sigma^0_2$ sets are not dense}
\author[T.~F. Kent]{Thomas F. Kent}
\address{Independent Researcher, Iowa, 50010, USA}
\email{\href{mailto:tfkent.resarch@gmail.com}{tfkent.resarch@gmail.com}}
\thanks{The first author was partially supported by Marie 
Curie Incoming International Fellowship of the European Community 
FP6 Program under contract number MIFI-CT-2006-021702.}
\author[K.M.~Ng]{Keng Meng Ng}
\address{Division of Mathematical Sciences, School of Physical \& Mathematical
Sciences, College of Science\\
Nanyang Technological University\\
Singapore}
\email{\href{mailto:kmng@ntu.edu.sg}{kmng@ntu.edu.sg}}
\thanks{The second author was partially supported by the Ministry of 
Education, Singapore, under its Academic Research Fund Tier 2 (MOE-T2EP20222-0018).}
\author[A.~Sorbi]{Andrea Sorbi}
\address{Dipartimento di Ingegneria Informatica e Scienze Matematiche\\
Universit\`a Degli Studi di Siena\\
I-53100 Siena, Italy}
\email{\href{mailto:andrea.sorbi@unisi.it}{andrea.sorbi@unisi.it}}
\thanks{The third author is a member of INDAM-GNSAGA}
\keywords{$s$-reducibility}
\subjclass[2010]{03D25, 03D30}
\begin{document}

\maketitle

\begin{abstract}
Answering an open question raised by Cooper, we show that there exist 
$\Delta^0_2$~sets $D$ and $E$ such that the  singleton degree of $E$ is a 
minimal cover of the singleton degree of $D$. This shows that the 
$\Sigma^{0}_{2}$ singleton degrees, and the $\Delta^{0}_{2}$ singleton 
degrees, are not dense (and consequently the $\Pi^0_2$ $Q$-degrees, and the 
$\Delta^{0}_{2}$ $Q$-degrees, are not dense). Moreover $D$ and $E$ can be 
built to lie in the same enumeration degree. 
\end{abstract}

\section{Introduction}
The importance of enumeration reducibility and its degree structure has 
considerably grown in recent years, in the wake of several striking results, 
such as the first order definability of the Turing degrees inside the 
enumeration degrees~\cite{Turing-definability}. Enumeration reducibility is a 
formalization of the notion of relative enumerability between sets of natural 
numbers, the informal idea being that a set $A$ is enumerable relatively to a 
set $B$ if there exists an algorithm which enumerates $A$ given any 
enumeration of $B$. Formally, given sets $A, B \subseteq \omega$ (where 
$\omega$ denotes the set of natural numbers) we say that $A$ is 
\emph{enumeration reducible} (or, simply, \emph{$e$-reducible}) to $B$ (in 
symbol: $A \leq_{e}B$) if there exists a computably enumerable (c.e.) set 
$\Phi$ such that 
\[
A=\set{x: (\exists u)[\ang{x, u} \in \Phi \, \&\, D_{u} \subseteq B]},
\]
where $D_{u}$ is the finite set with canonical index $u$. (In the following, 
we will often identify finite sets with their canonical indices, thus writing 
for instance $\ang{ x, F } \in \Phi$ meaning that $F$ is finite and $\ang{x, 
u } \in \Phi$ where $F=D_{u}$.) We also write in this case $A=\Phi(B)$, thus 
viewing any c.e.\ set as an operator (called an \emph{$e$-operator}) from 
sets to sets.  The structure of the \emph{$e$-degrees} is the set of 
equivalence classes of the equivalence relation $\equiv_e$ on sets, in which 
$A \equiv_e B$ if and only if $A\leq_e B$ and $B \leq_e A$. The $e$-degree of 
a set $A$ will be denoted by $\bs{a} = \deg_e(A)$. 

An important restricted version of $e$-reducibility is the so-called 
\emph{singleton enumeration reducibility}, or simply \emph{$s$-reducibility}, 
which only uses $e$-operators $\Phi$ subject to the restriction 
\[
\ang{ x, F } \in \Phi \Rightarrow [\textrm{$F=\emptyset$ or $F$
is a singleton}].
\]
Any $e$-operator satisfying this restriction is called an $s$-operator. We 
say that a set $A$ is \emph{$s$-reducible} to a set $B$ (denoted by 
$A\leq_{s}B$) if there is an $s$-operator $\Phi$ such that $A=\Phi(B)$. The 
equivalence relation originated by $\leq_{s}$ is denoted by $\equiv_{s}$ 
(where $A \equiv_{s} B$ if $A\leq_{s} B$ and $B\leq_{s} A$), and its 
equivalence classes are called \emph{$s$-degrees}: the $s$-degree of a set 
$A$ will be denoted by $\bs{a} = \deg_s(A)$. 

In general, for $r \in \{s,e\}$ we use the same symbol $\leq_{r}$ to denote 
both the reducibility (i.e. the pre-ordering relation on sets) and the 
partial ordering relation induced by the reducibility on the $r$-degrees. 

Although $s$-reducibility is strictly included in $e$-reducibility, in most 
concrete examples where $e$-reducibility shows up in mathematics or logic, it 
takes the form of $s$-reducibility. This is probably due to the fact that 
$\leq_{e}$ nicely embeds into $\leq_{s}$ by $A \leq_{e} B$ if and only if 
$A^{*}\leq_{s} B^{*}$, where, given a set $X$, we denote by~$X^{*}$ the set 
of all finite subsets of $X$, identified by their canonical indices. Interest 
in $s$-reducibility (often through its isomorphic presentation as 
$Q$-reducibility, of which we recall later the precise definition), relies 
also in its many applications to other sections of computability theory, to 
logic, and general mathematics. For instance it plays a key role in 
Marchenkov's solution of Post's Problem using Post's methods 
(\cite{Marchenkov:Incomplete}); and has applications to word problems (see 
for instance, \cite{Belegradek, Dobritsa}) and to abstract computational 
complexity (see for instance, \cite{Blum-Marques:Complexity}, 
\cite{Gill-Morris:subcreative}). For a survey on $Q$-reducibility and its 
applications see \cite{Omanadze:survey}. 

Throughout the rest of the paper we assume to have fixed some effective 
listing $\set{\Phi_{e}: e \in \omega}$ of the $s$-operators. Without loss of 
generality, we assume to work with uniformly computable approximations 
$\set{\Phi_{e, s}: e, s \in \omega}$ to the $s$-operators such that for every 
$e$ and $s$, $\Phi_{e, 0}= \emptyset$, $\Phi_{e, s+1} \smallsetminus \Phi_{e, 
s}$ contains at most one element, and if $\ang{x, F} \in \Phi_{e, s}$ then 
$x, \max (F) <s$ (where we understand $\max(\emptyset)=-1$). By 
\cite[Proposition~5]{Cooper-McEvoy} we may also assume that if $\set{X_s}_{s 
\in \omega}$ is any computable sequence of finite sets then we can combine 
this sequence with $\set{\Phi_{e, s}: s \in \omega}$, to get a computable 
sequence $\set{\Phi_{e}(X)[s]}_{s \in \omega}$ of finite sets, such that if 
$\set{X_s}_{s \in \omega}$ is a $\Sigma^0_2$-approximation to a set $X$ 
(i.e., $X=\set{x: (\exists t)(\forall s \geq t)[ x \in X_s] }$)), then 
$\set{\Phi_{e}(X)[s]}_{s \in \omega}$ is a $\Sigma^0_2$-approximation to the 
set $\Phi_e(X)$.

\section{A minimal cover in the $\Sigma^0_2$ $s$-degrees}

One of the most celebrated theorems on $e$-degrees is Gutteridge's theorem 
\cite{Gutteridge}, stating that there is no minimal $e$-degree. Gutteridge's 
proof works for the $s$-degrees as well. Thus (where we let $\bs{0}_s$ denote 
the least $s$-degree, consisting of all c.e. sets) 

\begin{theorem}\cite{Gutteridge}\label{theorem:gutteridge}
For every $s$-degree $\bs{a}>_s \bs{0}_s$, there exists an $s$-degree 
$\bs{b}$ such that $\bs{0}_s<_s \bs{b} <_s \bs{a}$. 
\end{theorem}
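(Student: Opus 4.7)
Fix a representative $A$ of $\bs{a}$. Since $\bs{a}>_s\bs{0}_s$, $A$ is not c.e., and this non-c.e.-ness will drive every diagonalization. The plan is to build a set $B$ satisfying
\[
B\leq_s A,\qquad B \text{ is not c.e.},\qquad A\not\leq_s B,
\]
so that $\bs{b}:=\deg_s(B)$ has $\bs{0}_s<_s\bs{b}<_s\bs{a}$.

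The construction is Gutteridge's tree-of-strategies argument, which threads a delicate needle: to keep $B\leq_s A$ and $B$ non-c.e.\ we want $B$ close to $A$ (otherwise it is too simple), yet for $A\not\leq_s B$ we want $B$ far from $A$. I would balance this by building $B$ together with an $s$-operator $\Theta$, enumerating axioms of $\Theta$ alongside elements of $B$ in such a way that $\Theta(A)=B$ holds by construction; this gives $B\leq_s A$ for free. The remaining requirements, for each $e\in\omega$, are
\[
P_e:\; B\neq W_e,\qquad R_e:\; A\neq\Psi_e(B),
\]
where $\set{W_e}$ enumerates c.e.\ sets and $\set{\Psi_e}$ enumerates $s$-operators. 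Collectively, $P_e$ forces $\bs{b}>_s\bs{0}_s$ while $R_e$ forces $\bs{b}<_s\bs{a}$.

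Next I would organise these strategies on a priority tree in which each $R_e$-node carries two outcomes, corresponding to a guess about whether $A\subseteq\Psi_e(B)$. On the expansionary outcome the strategy restrains certain axioms of $\Theta$ from firing, confining $\Psi_e(B)$ to a c.e.\ set; the non-c.e.-ness of $A$ then supplies some $a\in A\setminus\Psi_e(B)$ satisfying $R_e$. On the other outcome a witness $y\in\Psi_e(B)\setminus A$ appears directly and $R_e$ is permanently met. The $P_e$-strategies, interleaved below the $R_e$-nodes, pick fresh witnesses which they may force into $B$ via new axioms of $\Theta$, exploiting that $A$ has infinitely many members outside any single c.e.\ set to eventually diagonalise against $W_e$.

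The hard part will be the priority coordination: identifying a true path through the tree and verifying by induction that along it each strategy acts only finitely often, that the $R_e$-restraints do not permanently block the $P_e$-witnesses, and that the resulting $B$ (together with $\Theta$) meets every requirement. This is the standard verification for Gutteridge-style constructions and runs through \emph{mutatis mutandis} in the $s$-degree setting, since every $s$-operator is an $e$-operator and the non-c.e.-ness of $A$ — the sole non-effective ingredient — is used in the same way as in \cite{Gutteridge}.
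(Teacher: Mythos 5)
The paper itself does not prove this statement: it is Gutteridge's theorem, quoted with a pointer to \cite{Gutteridge} and \cite{Cooper:e-Density}. So your proposal has to stand on its own, and as it stands it has a genuine gap: every piece that makes this theorem hard is deferred to ``the standard verification \dots mutatis mutandis''. The overall frame is the right one (build $B=\Theta(A)$ for an $s$-operator $\Theta$ you enumerate, and meet $P_e: B\neq W_e$ and $R_e: A\neq \Psi_e(B)$), but the mechanisms you describe for meeting the requirements are not implementable as stated. The whole point is that the construction must be \emph{computable}: $B\leq_s A$ holds only because $\Theta$ is a c.e.\ set of axioms, while $A$ is an arbitrary non-c.e.\ set for which the construction has no enumeration and no approximation. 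Hence the construction cannot observe the events your strategies react to: it cannot see that ``a witness $y\in\Psi_e(B)\smallsetminus A$ appears'' (membership in $A$, hence in $B=\Theta(A)$ and in $\Psi_e(B)$, is not a computably detectable event); it cannot ``force a witness into $B$'' except via an axiom $\ang{y,\emptyset}$ (which makes that part of $B$ c.e.\ and useless against $P_e$) or via an axiom $\ang{y,\set{x}}$ whose firing it cannot verify; and it cannot identify a true path, because the outcomes you attach to the $R_e$-nodes (e.g.\ whether $A\subseteq\Psi_e(B)$) are not events the construction can ever certify. A priority tree with observable expansionary outcomes is the right tool when the ambient sets are enumerated or approximated by the construction itself; here they are not. (Also, the assertion that $A$ has infinitely many members outside \emph{any} c.e.\ set is false as stated --- take $W_e=\omega$; what non-c.e.-ness gives is that $A\smallsetminus W$ is infinite for c.e.\ $W\subseteq A$, and one must say where such $W$ come from.)

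What is missing, concretely, is the device that Gutteridge's proof (see the presentation in \cite{Cooper:e-Density}) supplies: the axioms of $\Theta$ (single-use ``marker'' axioms, which is why the argument lives inside $s$-reducibility) are enumerated blindly, by a fully computable procedure that never consults $A$, and all the non-uniformity is pushed into the \emph{verification}, where one argues by cases that if a given requirement's attempts failed cofinitely often then $A$ itself would be c.e., contradicting $\bs{a}>_s\bs{0}_s$. Your sketch never explains how a strategy imposes restraint on a set $B$ it does not control pointwise so as to ``confine $\Psi_e(B)$ to a c.e.\ set'', nor how the $P_e$-witnesses survive such restraint, nor how any of this is decided without access to $A$. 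Until those mechanisms are specified, the proposal is a plan for a proof rather than a proof.
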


\begin{proof}
See \cite{Gutteridge}. See also \cite{Cooper:e-Density}.
\end{proof}

On the other hand, another celebrated theorem concerning the $e$-degrees (the 
Cooper Density Theorem~\cite{Cooper:Density-II}) reads that the $e$-degrees 
below the first jump $\bs{0}'_{e}$ are dense. (We recall that 
$\bs{0}'_{e}=\deg_{e}(\ol{K})$, where $\ol{K}$ is the complement of the 
halting set, and the $e$-degrees below $\bs{0}'_{e}$  partition exactly the 
$\Sigma^{0}_{2}$ sets.) Cooper's Density Theorem was later improved by 
Lachlan and Shore~\cite{Lachlan-Shore:Density} to show that density is shared 
by the $e$-degrees of any class of sets possessing what Lachlan and Shore 
call ``good'' approximations, see~\cite{Lachlan-Shore:Density}: the 
$\Sigma^0_2$ sets form such a class. 

Motivated by his theorem showing density of the $\Sigma^{0}_{2}$ $e$-degrees, 
Cooper (see \cite[Question~\S5(e)(11)]{Cooper:Enumeration}, and 
\cite[Question~26]{Cooper:Enumeration-Survey}) asked whether the 
$\Sigma^{0}_{2}$ $s$-degrees are dense as well. Our 
Theorem~\ref{thm:delta2emptyintervals} below answers that question, by 
showing in fact that there are $\Delta^{0}_{2}$ $s$-degrees $\bs{d}<_s 
\bs{e}$ such that the $s$-degree interval $(\bs{d}, \bs{e})$ is empty, i.e.~ 
$\bs{e}$ is a minimal cover of~$\bs{d}$ in the $s$-degrees.   

We recall that Downey, La~Forte and Nies \cite{Downey-Laforte-Nies:Q-degrees} 
showed that the c.e.\ $Q$-degrees are dense. A set $A$ is 
\emph{$Q$-reducible} to a set $B$ (in symbols: $A \leq_Q B$) if there exists 
a computable function $f$ such that 
\[
(\forall x)[x \in A \Leftrightarrow W_{f(x)}\subseteq B],
\]
where we refer to the standard listing $\{W_e\}_{e \in \omega}$ of the 
computably enumerable sets. Via the isomorphism between $Q$-degrees and 
$s$-degrees provided by complements of sets (if $B\ne \omega$, then $A\leq_Q 
B$ if and only if $\ol{A}\leq_{s}\ol{B}$, where for any given set $X 
\subseteq \omega$, the symbol $\ol{X}$ denotes the complement $\omega 
\smallsetminus X$ of that set\footnote{Indeed, originally Friedberg and 
Rogers~\cite{Friedberg-Rogers} defined $s$-reducibility by $A \leq_s B$ if 
there is a computable function $f$ such that $ (\forall\,x)[x\in A 
\Leftrightarrow W_{f(x)} \cap B \neq \emptyset]$: it is easy to see that if 
$B \neq \emptyset$ then this definition coincides with the definition of 
$\leq_s$ given earlier.}) we then have that the $\Pi^{0}_{1}$ $s$-degrees are 
dense. In view again of this isomorphism, 
Theorem~\ref{thm:delta2emptyintervals} shows that the $\Delta^{0}_{2}$ 
$Q$-degrees are not dense (in fact, there are $\Delta^{0}_{2}$ $Q$-degrees 
$\bs{a}<_Q \bs{b}$ such that the $Q$-degree interval $(\bs{a}, \bs{b})$ is 
empty), thus showing also that the density result by Downey, La~Forte and 
Nies cannot be improved from c.e.\ sets to $\Delta^{0}_{2}$ sets. 

\subsection{The theorem} The following theorem answers Cooper's question 
whether the $\Sigma^{0}_{2}$ $s$-degrees are dense. 

\begin{theorem}\label{thm:delta2emptyintervals}
There exist $\Delta^0_2$, in fact co-2-c.e., sets $D$ and $E$ such that $D 
<_s E$ and there is no set~$Z$ with $D <_{s} Z <_{s} E$. 
\end{theorem}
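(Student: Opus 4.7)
The plan is a $0''$-priority construction on a tree of strategies that builds co-$2$-c.e.\ approximations $\{D_s\}$ and $\{E_s\}$ meeting, for each $e\in\omega$, the requirements $\nreq_e\colon \Phi_e(D)\neq E$ and $\rreq_e$: either there is an $s$-operator $\Psi$ with $\Phi_e(E)=\Psi(D)$, or there is an $s$-operator $\Theta$ with $E=\Theta(\Phi_e(E))$. To secure $D\leq_s E$ as a global invariant, I reserve a computable column $C\subset\omega$ and, via a computable bijection $\pi\colon\omega\to C$ together with a fixed global $s$-operator $\Gamma$, enforce $x\in D\iff \pi(x)\in E$ throughout the construction; every retraction on one side is paired with the coded retraction on the other, which keeps both sets co-$2$-c.e.\ and makes $\Gamma$ a fixed witness to $D\leq_s E$. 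Together with this, the $\nreq_e$'s yield $D<_s E$ while the $\rreq_e$'s rule out any strict intermediate $s$-degree, since any $Z\leq_s E$ equals $\Phi_e(E)$ for some $e$ and $\rreq_e$ then forces $Z\leq_s D$ or $E\leq_s Z$.

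A strategy for $\nreq_e$ follows the classical Friedberg diagonalisation, exploiting that elements outside $C$ are in $E$ by default in the co-$2$-c.e.\ approximation: pick a fresh follower $x\notin C$, impose a restraint on $D$, wait for $x$ to appear in $\Phi_e(D)$, then retract $x$ from $E$ while preserving the restrained part of $D$, yielding $x\in\Phi_e(D)\smallsetminus E$. The main obstacle is the $\rreq_e$ strategy, which I model on a Spector-style splitting-tree argument adapted to the positive, monotone setting of $s$-operators. A node $\sigma$ assigned $\rreq_e$ concurrently builds auxiliary $s$-operators $\Psi_\sigma$ and $\Theta_\sigma$. By default $\sigma$ guesses that each element $y$ enumerated into $\Phi_e(E)$ is witnessed either by an axiom $\ang{y,\emptyset}\in\Phi_e$, or by an axiom $\ang{y,\{x\}}\in\Phi_e$ with $x\in C$; in the second case the coding makes the computation reproducible from $D$, and $\sigma$ mirrors it into $\Psi_\sigma$ via a matching singleton axiom. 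Whenever $\sigma$ detects an axiom $\ang{y,\{x\}}\in\Phi_e$ with $x\notin C$ not yet so mirrored, it declares a \emph{splitting} and enumerates $\ang{x,\{y\}}$ into $\Theta_\sigma$, so that $y\in\Phi_e(E)$ will henceforth let us recover $x\in E$. The node has outcomes $\Psi$ (only finitely many splittings occur) and $\Theta$ (infinitely many), with $\Psi_\sigma(D)=\Phi_e(E)$ verified along the former outcome and $\Theta_\sigma(\Phi_e(E))=E$ along the latter.

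The delicate point, and what I expect to be the main technical obstacle, is the interference between the $\nreq$-retractions (and the retractions forced by the global coding) and the splitting axioms built by lower-priority $\rreq$-nodes: removing $x$ from $E$ can invalidate a splitting $\ang{x,\{y\}}$ already placed in $\Theta_\sigma$, because $y$ could still linger in $\Phi_e(E)$ through another axiom. I expect to resolve this by channelling every retraction through the global coding, so that the co-$2$-c.e.\ behaviours of $E$ and of $\Phi_e(E)$ end up consistent on every splitting that survives along the true path, together with the standard tree-of-strategies bookkeeping that ensures each $\rreq_e$ is injured only finitely often at its true-path node. A routine verification on the true path then meets every requirement and delivers the desired co-$2$-c.e.\ sets $D$ and $E$.
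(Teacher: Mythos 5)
There is a genuine gap, and it sits exactly at the point you flag as ``the delicate point.'' Your overall skeleton (a tree construction with diagonalization requirements and, for each $e$, a dichotomy between an operator $\Psi_\sigma$ witnessing $\Phi_e(E)\leq_s D$ and an operator $\Theta_\sigma$ witnessing $E\leq_s\Phi_e(E)$) matches the shape of the paper's argument, but the mechanism you propose for building the two operators does not work. On the $\Psi$ side: you only mirror axioms $\ang{y,\set{x}}\in\Phi_e$ with $x\in C$ (plus empty-use axioms), so $\Psi_\sigma(D)$ can never capture an element $y$ whose only valid $\Phi_e$-axioms have uses outside the coding column; and since no strategy in your description ever extracts anything from $D$ (your $\mathcal{N}$-followers are chosen off $C$, and the coding only mirrors $D$-changes that never happen), your $D$ comes out computable. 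That is fatal independently of the verification details: a minimal cover of the least $s$-degree would be a minimal $s$-degree, contradicting Gutteridge's theorem. The paper avoids this precisely by having each active $S$-strategy place fresh $\gamma$-markers \emph{into} $D$ as uses of its $\Gamma$-axioms, with the commitment to extract the marker $\gamma(z,x)$ from $D$ if the $A$-use $x$ leaves $A$; this is what lets the first disjunct handle arbitrary uses and what makes $D$ noncomputable.

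On the $\Theta$ side, ``channelling every retraction through the global coding'' cannot repair the splitting axioms, because the threat is opponent-controlled: once you have put $\ang{x,\set{y}}$ into $\Theta_\sigma$ and some follower retraction removes $x$ from $E$, the opponent may keep $y\in\Phi_e(E)$ forever via an axiom $\ang{y,\emptyset}$ or via another use that remains in $E$, and the coding gives you no handle on those axioms. The paper's construction survives this only because its analogue of your splittings (the $\Delta$-setups $(z,x)$) is created in one very specific configuration: $x$ is a realized diagonalization witness whose extraction is blocked \emph{only} by a $\gamma$-marker conflict (``not $\Gamma$-cleared''); at every later stage the strategy re-checks whether some stream element has become cleared, in which case it abandons the $\Delta$-strategy and diagonalizes (covering the cases of a new empty-use axiom or a new use it can afford to keep), and along the infinitary outcome it repeatedly kills $\Gamma$ by extracting from $D$ the marker uses of all competing axioms, so that in the limit the only surviving axiom for $z$ has use $x$ and the correspondence $x\in A\Leftrightarrow z\in\Phi_e(A\oplus D)$ holds. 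Moreover, lower-priority strategies must choose their witnesses from the stream of already-established setups so that their extractions never invalidate a higher-priority $\Delta$-axiom. Your proposal attaches splittings to arbitrary non-$C$ uses, has no notion of cleared witnesses, no marker-killing, and no stream discipline, so the verification of $\Theta_\sigma(\Phi_e(E))=E$ along the infinitary outcome would fail. To fix the proof you would essentially have to import this marker/cleared/stream machinery, which is the real content of the paper's construction.
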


\begin{proof}
We wish to construct sets $A$ and $D$ satisfying the following requirements, 
for all numbers $k,e \in \omega$, 
\begin{align*}
R_k &:A\neq\Phi_k(D),\\
S_e &: Z_{e}=\Phi_{e}(A\oplus D)\Rightarrow
\left[ Z_{e}=\Gamma_e(D)~\vee~A=\Delta_e(Z_{e})\right],
\end{align*}
where $\Gamma_e$ and $\Delta_e$ are two $s$-operators built by us. 
Satisfaction of all $R$-requirements gives that $A\nleq_s D$, thus $D <_{s} 
E$ where $E=A \oplus D$. On the other hand, satisfaction of all 
$S$-requirements guarantees that there is no $Z$ such that $D <_{s} Z <_{s} 
E$.  This follows from the fact that, by the $S$-requirements, if $D \leq_{s} 
Z \leq_{s} A \oplus D$ then either we have $Z\leq_{s} D$ and thus 
$Z\equiv_{s} D$, or $A\leq_{s} Z$ and thus $A \oplus D \leq_{s} Z$ giving $A 
\oplus D \equiv_{s} Z$. 

\subsection{The strategies}

The construction is similar to the construction of an Ahmad pair in the 
$\Sigma^{0}_{2}$ e-degrees, see \cite{Ahmad:some-pairs}. We briefly outline 
the strategies. 

\subsubsection{The $R_{k}$-strategy} 
The strategy to meet the requirement $R_{k}$ picks a \emph{follower} (or, a 
\emph{witness})~$x \in A$; waits for $x$ to show up in $\Phi_{k}(D)$ via some 
axiom $\ang{ x, F} \in \Phi_k$ with $F \subseteq D$ (we say in this case that 
$x$ becomes \emph{realized}); at this points extracts $x$ from $A$, and 
restrains $x \in \Phi_{k}(D)$, by restraining $F \subseteq D$. 

\subsubsection{The $\Gamma$-strategy of the $S_{e}$-strategy}
The strategy to meet the requirement $S_e$ tries at first to build an 
$s$-operator $\Gamma_e$, such that $Z_e=\Gamma_e(D)$, as follows. For every 
$z\in Z_{e}\smallsetminus \Gamma_e(D)$ the strategy appoints an axiom $\ang{ 
z, F} \in \Gamma_{e}$, and tries to keep $\Gamma_{e}$ correct, making sure 
that $z$ gets extracted from $\Gamma_{e}(D)$ (by making $F\nsubseteq D$, and 
thus invalidating the axiom $\ang{ z, F} \in \Gamma_{e}$) if $z$ leaves 
$Z_{e}$. In more detail, if we see $z \in \Phi_{e}(\emptyset\oplus \emptyset) 
\smallsetminus \Gamma_e(D)$ then we enumerate the axiom $\ang{ z, \emptyset} 
\in \Gamma_{e}$: notice that in this case $z$ will never leave 
$\Phi_{e}(A\oplus D)$ and, consistently, the axiom which has been enumerated 
into $\Gamma_e$ permanently keeps $z \in \Gamma_{e}(D)$; if we see $z \in 
\Phi_{e}(\emptyset \oplus \set{d})\smallsetminus \Gamma_e(D)$ with $d \in D$ 
(we say in this case that $d$ is a \emph{$D$-use of $z$ in $\Phi_e$)} then we 
enumerate the axiom $\ang{ z, \set{d}} \in \Gamma_{e}$: notice that such an 
axiom does not require any correcting action because if $z$ leaves $Z_{e}$ 
due to $d$ leaving $D$ then the axiom $\ang{ z, \set{d}} \in \Gamma_{e}$ 
automatically becomes invalid; if we see $z \in \Phi_{e}(\set{x} \oplus 
\emptyset) \smallsetminus \Gamma_e(D)$ and $x \in A$ (we say in this case 
that $x$ is an \emph{$A$-use} of $z$ in $\Phi_e$) then we appoint a 
\emph{$\gamma_{e}$-marker $\gamma_{e}(z,x)=d \in D$} for the pair $(z,x)$, 
and enumerate the axiom $\ang{z, \set{d}}\in \Gamma_{e}$; subsequently, the 
$S_{e}$-strategy may want to correct the value of $\Gamma_{e}(D)(z)$ by 
extracting the $\gamma_e$-marker $\gamma_{e}(z,x)$ from $D$ if it sees that 
$z$ leaves $Z_{e}$ due to $x$ leaving $A$. The $\gamma_{e}$-markers are 
picked in an injective way: distinct pairs $(z,x)$ get distinct markers. The 
attempt of $S_e$ at satisfying itself, by building and maintaining a suitable 
$s$-operator $\Gamma_e$, will be called \emph{the $\Gamma$-strategy of 
$S_e$}. 

\subsubsection{$R_{k}$ below $S_{e}$} \label{sec:R-below-S}
Let us describe how the individual actions of the strategies for $S_{e}$ and 
$R_{k}$ interact with each other when $S_{e}$ has higher priority than 
$R_{k}$. As already explained, the $R_{k}$-strategy picks a witness $x \in A$ 
and waits for this witness to become realized. Waiting forever corresponds to 
a winning \emph{finitary} outcome for $R_k$ (since in this case $x \in A 
\smallsetminus \Phi_k(D)$), which does not injure the $\Gamma$-strategy of 
$S_e$, i.e.\ it does not interfere with the possibility for $S_e$ of 
maintaining a correct $\Gamma_e$. 

If at some stage we see $x\in \Phi_k(D)$, then the $R_{k}$-strategy would 
like to extract $x$ from~$A$ and restrain $x\in \Phi_k(D)$.  At this point, 
we have $x \in \Phi_k(D)$ thanks to some axiom $\ang{x, F} \in \Phi_k$ with 
$F \subseteq D$.  Since there might be more than one such set $F$, we pick 
the one such that $\ang{ x, F }$ has been responsible for keeping $x\in 
\Phi_k(D)$ for the longest time. As we assume that at most one element enters 
$\Phi_k$ at any given stage, this set is uniquely determined by $x$, and will 
be denoted by $F_x$.  Clearly, $F_x=\emptyset$ or $F_x=\set{d_x}$ for some 
number $d_x$). Ideally, the $R_{k}$ strategy would restrain $x\in \Phi_k(D)$ 
by restraining $F_x \subseteq D$. 

However, the extraction of $x$ from~$A$ may interfere with the $S_e$-strategy 
aiming at maintaining a correct $\Gamma_e$. Consider all numbers $z \in 
Z_{e}$ with $A$-use $x$ in $\Phi_e$, for which we have already appointed 
$\gamma_e$-markers $\gamma_e(z,x)$ which are still in $D$. If $R_k$ goes on 
with the extraction of $x$ from $A$, then $S_e$ will want to respond by 
extracting all these $\gamma_{e}$-markers, in order to correct 
$\Gamma_e(D)(z)$. If all these actions can be performed (extraction of $x$ 
from~$A$, restraint of $x\in \Phi_k(D)$ by restraining $F_x\subseteq D$, 
correction of $\Gamma_e(D)(z)$, for all relevant~$z$), including the lucky 
case when we have no correction to make, then we say that the pair $x$, $F_x$ 
is \emph{$\Gamma_e$-cleared} (for a rigorous definition, see 
Definition~\ref{def:i-cleared}). As~$F_x$ is uniquely determined by $x$, we 
might also simply say that $x$ is \emph{$\Gamma_e$-cleared}. With these 
actions, $R_{k}$ succeeds in winning $x\in \Phi_{k}(D)\smallsetminus A$, and 
$R_k$ takes a \emph{finitary} outcome which again does not interfere with the 
$\Gamma$-strategy of $S_e$. 

\ref{sec:R-below-S}.(a) \emph{Emergence of a setup: the $\Delta$-strategy of 
$S_e$.} However, when the witness $x$ becomes realized (through axiom 
$\ang{x,\set{d_x}} \in \Phi_k$ with $d_x \in D$), $R_k$ might have problems 
to restrain $d_{x}\in D$ to achieve $x \in \Phi_k(D)$, without injuring the 
$\Gamma$-strategy of $S_e$. Indeed, if $x$ is not $\Gamma_e$-cleared, there 
could be a number $z$ such that $d_{x}=\gamma_e(z,x)$, and 
$\Gamma_e$-correction (demanding the extraction of~$d_{x}$) would conflict 
with restraining $d_{x}\in D$. (Notice that by injectivity of the 
$\gamma_{e}$-markers, \emph{at most} one number $z$ will have 
$\gamma_{e}(z,x)=d_{x}$.) Since $x$ is not $\Gamma_e$-cleared, we cannot 
extract $x$ from $A$ without extracting $z$ from $\Phi_e(A\oplus D)$. So, for 
the pair $(z,x)$ we observe the correspondence ``$x\in A\Leftrightarrow z\in 
Z_{e}$'' which we describe by saying that $(z,x)$ is a 
\emph{$\Delta_e$-setup}. In this case, $R_k$ turns to the 
\emph{$\Delta$-strategy of $S_e$} and tries to use the setup for the purpose 
of extending the definition of an $s$-operator $\Delta_e$ such that 
$A=\Delta_e(Z_e)$: we put $x$ into $\Delta_e(Z_{e})$ with use $z$, by 
enumerating the axiom $\ang{x, \set{z}} \in \Delta_{e}$, and to keep 
$\Delta_{e}$ correct we wish that the $\Delta_e$-setup $(z,x)$ maintain its 
validity, i.e.\ the correspondence ``$x\in A\Leftrightarrow z\in Z_{e}$''; we 
add $x$ to the \emph{stream} of the $\Delta_{e}$-strategy (which is a set 
comprised of numbers $x'$ for which there is a $\Delta_e$-setup $(x',z')$ 
such that we have a valid correspondence ``$x'\in A\Leftrightarrow z'\in 
Z_{e}$''); we remove all $\gamma_{e}(\cdot, x)$-markers from $D$, (in fact, 
almost all of them, respecting higher priority restraints): we say in this 
case that we \emph{kill} $\Gamma_{e}$. If this injury process of $R_k$ 
against the $\Gamma$-strategy of $S_e$ is repeated infinitely many times, 
then the stream grows to be infinite. This allows the lower priority 
strategies to pick up their witnesses from the elements of the stream, 
without harming the correctness of $\Delta_e$ as long as the various setups, 
corresponding to the elements in the stream, remain valid. Note that if this 
is the true outcome, then $R_k$ takes an \emph{infinitary} outcome, under 
which it gives up its own satisfaction (this task must therefore be delegated 
to another strategy), turning its efforts to the satisfaction of~$S_e$, 
through the building of a correct $\Delta_e$. 

\ref{sec:R-below-S}.(b) \emph{Removal of a setup: going back to the 
$\Gamma$-strategy.} A $\Delta_e$-setup $(z,x)$ can later loose its validity. 
This is the case if a new, and currently valid, axiom for $z$ is enumerated 
into $\Phi_{e}$.  In this case, if this new axiom is of the form $\ang{z, 
\emptyset \oplus \emptyset} \in \Phi_{e}$, or has an $A$-use $a\in A$ in 
$\Phi_e$ different from~$x$, then the $R_{k}$-strategy can diagonalize by 
extracting $x$ from $A$ and (in the latter case) keeping $a$ in $A$.  This 
ensures that the value $\Phi_{e}(A\oplus D)(z)=1$ will not request any 
$\Gamma_{e}$-correction to extract~$z$ from $\Gamma_{e}(D)$, and so the 
extraction of $x$ from $A$ is compatible with having $d_{x} \in D$. If the 
use of $z$ in $\Phi_e$ in the new axiom is a $D$-use $d\in D$, and $d$ is 
currently not a $\gamma_{e}(\cdot, x)$-marker, then the $R_{k}$-strategy can 
diagonalize as above by keeping $d$ in~$D$ (after all, there is no need to 
extract $d$ from $D$ in the future since $d$ will not be a 
$\gamma_{e}$-marker which the $\Gamma$-strategy of $S_e$ needs to extract). 
In each of the situations described above, $R_k$ goes on with the extraction 
of $x$ from $A$, having $d_{x} \in D$, without interfering with 
$\Gamma_e$-correction. So $R_{k}$ succeeds in winning $x\in 
\Phi_{k}(D)\smallsetminus A$, and takes a \emph{finitary} outcome which does 
not injure the $\Gamma$-strategy of $S_e$. 

\ref{sec:R-below-S}.(c) \emph{Persistence of a setup in the limit.} However 
we might have a problem if a new axiom $\ang{z, \emptyset \oplus \set{d}} \in 
\Phi_e$ appears, and $d$ is a $\gamma_{e}(\cdot,x)$-marker: it could be that 
$d=\gamma_{e}(z', x)$ for some $z' \in Z_{e}$ with $A$-use $x$ in $\Phi_{e}$. 
Now, it might be problematic to switch back to the $\Gamma$-strategy for 
$S_e$ because the extraction of $x$ from $A$ for diagonalization would cause 
$z'$ to leave $Z_{e}$, and the resulting $\Gamma_{e}$-correction due to $z'$ 
leaving~$Z_{e}$ would require the extraction of $d=\gamma_{e}(z',x)$ from 
$D$. In other words, the extraction of $x$ from~$A$ must be accompanied by 
the extraction of $d$ from $D$ and so the $R_{k}$-strategy might not be able 
to diagonalize, even though the correspondence ``$x\in A\Leftrightarrow z \in 
Z_{e}$'' is false, and hence the $s$-operator $\Delta_{e}$ is incorrect. 
However, observe that the correctness of $\Delta_{e}$ and hence the 
correspondence ``$x\in A\Leftrightarrow z \in Z_{e}$'' are required to hold 
only if $R_k$ takes the infinitary outcome corresponding to the 
$\Delta$-strategy of $S_e$. If this outcome is the true outcome, then in fact 
all markers $\gamma_{e}(\cdot, x)$ will be extracted from $D$. In fact, if $z 
\in \Phi_{e}(A\oplus D)$ keeps getting new axioms in the $D$-side with 
$D$-uses which are markers of the form $\gamma_{e}(\cdot, x)$, then all of 
these axioms will be automatically killed every time $R_{k}$ takes the 
infinitary outcome. Hence at the end of time, the only possibly applicable 
$\Phi_{e}$-axiom for $z$ which is left is the axiom $\ang{z, \set{x}\oplus 
\emptyset} \in \Phi_{e}$, and so the correspondence ``$x\in A\Leftrightarrow 
z\in Z_{e}$'' is true in the limit after all, and the value 
$\Delta_e(Z_e)(z)$ is correct. 

\subsection{The tree of strategies}
The strategies used in the construction have outcomes that can be
conveniently described by the elements of the set $\Lambda = \set{\ocstop,
\ocwait, \ocinf}\cup \set{\oci{i}: i \in \omega}$ ordered by
\[
\ocstop <_{\Lambda} \oci{0} <_{\Lambda}
\oci{1} <_{\Lambda} \cdots <_{\Lambda} 
\oci{n} <_{\Lambda} \cdots <_{\Lambda}
\ocwait <_{\Lambda} \ocinf.
\]
By $\Lambda^{<\omega}$ we denote the set of finite strings of elements of 
$\Lambda$. We use standard notations and terminology about strings. In 
particular, if $\alpha, \beta \in \Lambda^{<\omega}$, then the symbol $\left| 
\alpha \right|$ denotes the length of $\alpha$; the symbol $\alpha <_{L} 
\beta$ denotes that $\alpha$ is \emph{to the left} of $\beta$, i.e. there 
exists $i$ such that the $i$th component $\alpha(i)$ of $\alpha$ is different 
from the $i$th component $\beta(i)$ of $\beta$, and for the least such $i$ we 
have $\alpha(i)<_{\Lambda} \beta(i)$. The symbol $\alpha\leq \beta$ denotes 
$\alpha<_{L} \beta$, or $\alpha$ is an \emph{initial segment of $\beta$} 
(denoted by $\alpha \subseteq \beta$; if $\alpha \subseteq \beta$ and $\alpha 
\ne \beta$ then we write $\alpha \subset \beta$); the symbol $\alpha< \beta$ 
denotes $\alpha \leq \beta$ but $\alpha \ne \beta$. If $\alpha < \beta$ then 
we say that $\alpha$ has \emph{higher priority} than $\beta$ (or $\beta$ has 
\emph{lower priority} than $\alpha$). If $g$ is an infinite path through 
$\Lambda^{<\omega}$ and $n$ is a number, then $g\!\upharpoonright\! n$ 
denotes the initial segment of $g$ having length $n$. 

We now define the \emph{tree of strategies} $T \subseteq \Lambda^{<\omega}$ 
as follows. We effectively order the $R$- and $S$-requirements with the same 
order type as $\omega$, and effectively list them as $\set{U_e: e \in 
\omega}$: we say that $U_i$ has \emph{higher priority} than $U_e$ (or $U_e$ 
has \emph{lower priority} than $U_i$) if $i<e$. By induction on the length of 
the strings, we define which elements of $\Lambda^{<\omega}$ belong to $T$, 
the way they are assigned requirements, and whether or not they are active or 
satisfied  along their extensions. If $U$ is a requirement, we will say that 
a string $\beta$ is a \emph{$U$-string} if it has been assigned the 
requirement $U$. In particular, an \emph{$R$-string} is a string which has 
been assigned an $R$-requirement, and an \emph{$S$-string} is a string which 
has been assigned an $S$-requirement. The elements of $T$ will be simply 
called \emph{strings} (as we do not care about strings in 
$\Lambda^{<\omega}\smallsetminus T$), or also \emph{nodes}, or 
\emph{strategies}. 

Let the empty string $\lambda$ lie in $T$, and let $U_0$ be assigned to 
$\lambda$. Assume that we have already defined that $\alpha \in T$, and 
$\alpha$ has been assigned a requirement. We now define by cases the 
successors of $\alpha$, i.e., the elements $\beta \in T$ with 
$|\beta|=|\alpha|+1$:

\emph{Case 1}:  $\alpha$ is an $S$-strategy. Then $\alpha \tie \ang{\ocinf}$ 
is the immediate successor of $\alpha$. We will say that $\alpha$ is 
\emph{active along $\alpha \tie \ang{\ocinf}$}. Furthermore, for every $\beta 
\subset \alpha$, if $\beta$ is active or satisfied along $\alpha$ then 
$\beta$ is \emph{active} or \emph{satisfied along $\alpha \tie 
\ang{\ocinf}$}. The idea here is that if $\alpha$ is active, it is actively 
trying to satisfy its requirement by building and correcting a suitable 
$s$-operator $\Gamma$ (at which it may fail only due to the action of a lower 
priority $R$-strategy killing $\Gamma$ and building a different $s$-operator 
$\Delta$). Assign to $\alpha \tie \ang{\ocinf}$ the highest priority $U$ such 
that $U$ is not assigned to any $\beta$ which is either active or satisfied 
along $\alpha \tie \ang{\ocinf}$. 

\emph{Case 2:} $\alpha$ is an $R$-strategy. Then let $\set{\beta_{i}: i< n}$, 
with $\beta_0 \subset \beta_1 \subset \dots \subset \beta_{n-1} \subset 
\alpha$, be the set of $S$-strategies above $\alpha$ which are active along 
$\alpha$  (allowing $n = 0$). We let the immediate successors of $\alpha$ be 
$\alpha \tie \ang{o}$ for $o \in \set{\ocstop, \oci{0}, \dots, \oci{n-1}, 
\ocwait}$.  For $o \in \set{\ocstop, \ocwait}$, we say that $\alpha$ is 
\emph{satisfied along $\alpha \tie \ang{o}$}, and for every $\beta \subset 
\alpha$, if $\beta$ is active or satisfied along $\alpha$ then we say that 
$\beta$ is \emph{active} or \emph{satisfied along $\alpha \tie \ang{o}$}. Fix 
$i < n$: we say that $\alpha$ is \emph{neither active nor satisfied along 
$\alpha \tie \ang{\oci{i}}$}; for all $j < i$ we say that $\beta_j$ is 
\emph{active along $\alpha \tie \ang{\oci{i}}$}; we say that $\beta_i$ is 
\emph{satisfied along $\alpha \tie \ang{\oci{i}}$}; for all $j > i$ we say 
that $\beta_j$ is \emph{neither active nor satisfied along $\alpha \tie 
\ang{\oci{i}}$}; finally all other strategies are \emph{active} or 
\emph{satisfied along $\alpha \tie \ang{\oci{i}}$} if they are so along 
$\alpha$. For every outcome $o$ of $\alpha$, we assign to $\alpha \tie 
\ang{o}$ the highest priority $U$ such that $U$ is not assigned to any 
$\beta$ which is either active or satisfied along $\alpha \tie \ang{o}$. The 
intuition here is that an $R$-strategy $\alpha$ can infinitely injure the 
$\Gamma$-strategy of some $\beta_{j}$ as described above, and in the process 
it fails to meet its own requirement. In this case, strategy $\alpha$ 
isolates the strategy $\beta_{i}$ of highest priority for which this happens, 
and switches from the $\Gamma$-strategy to the $\Delta$-strategy for 
$\beta_i$ (as described in Section~\ref{sec:R-below-S}.(a)). Moreover, the 
$s$-operators of all $S$-strategies $\beta_{j}$, $i \leq j<n$, are destroyed, 
these strategies cease to be active, and thus the requirements corresponding 
to the nodes $\beta_{j}$, $i < j<n$, and $\alpha$, will need to be reassigned 
to nodes having lower priority in the priority tree. 

If $\alpha$ is an $R_{k}$-strategy then we may also write $\Phi_\alpha$ for 
$\Phi_k$; likewise, if $\alpha$ is an $S_{e}$-node then we may write 
$\Phi_\alpha$ for $\Phi_e$, and we may write $\Gamma_{\alpha}$ to represent 
the $s$-operator built by $\alpha$. If $\alpha$ is an $R$-node, and 
$\beta_i\subset \alpha$ is an $S$-node which is active along $\alpha$, then 
we may simply denote by $\Delta_{\beta_i}$ (instead of $\Delta_{\alpha \tie 
\ang{\oci{i}}}$), the $s$ operator built by $\alpha$ when turning to the 
$\Delta$-strategy of $\beta_i$. 

\subsubsection{The tree of strategies, and more on how the strategies 
interact}\label{ssct:more-on-stragies} We now try and give a somewhat more 
detailed description, using the tree of strategies, of how an $R$-strategy 
$\alpha$ interacts with the active $S$-strategies $\beta_0 \subset \beta_1 
\subset \dots \subset \beta_{n-1} \subset \alpha$ above it. Suppose that 
$\alpha$ is assigned the requirement $R_k$. It is just the case to observe 
that strategy $\alpha$ knows how to deal with the $S$-strategies 
$\beta\subset \alpha$ which are not active along $\alpha$, since the 
$\Gamma$-operators relative to these strategies are destroyed and almost all 
the related $\gamma$-markers are out of $D$. 

\emph{Outcome $\ocwait$}. If the last appointed witness is unrealized then 
$\alpha$ momentarily takes outcome $\ocwait$:  if this is eventually the true 
outcome then $\alpha$ wins the requirement $R_k$ since $x \in A 
\smallsetminus \Phi_\alpha(D)$. 

\emph{Outcome $\ocstop$}. Otherwise, all still \emph{uncanceled} (see 
Definition~\ref{def:aux}) witnesses are realized (henceforth the still 
uncanceled witnesses of $\alpha$, will be called the \emph{still available 
witnesses for $\alpha$}). If $\alpha$ sees some such witness $x$ which is 
$\Gamma_{\beta_i}$-cleared for all $i<n$, then extracts $x$ from $A$; 
restrains $F_x\subseteq D$ (by putting back $d_x\in D$ if needed: here 
$F_x=\set{d_x}$ is the finite set responsible for having made $x$ realized, 
as explained at the beginning of Section~\ref{sec:R-below-S}); permanently 
takes outcome $\ocstop$; and \emph{initializes} (see 
Definition~\ref{def:aux}) all lower priority strategies. 

Otherwise, all still available witnesses for $\alpha$ are realized, but none 
of them is $\Gamma_{\beta_i}$-cleared for all $i<n$. We distinguish the 
following cases. 

\ref{ssct:more-on-stragies}.(a) (\emph{One $S$-strategy $\beta_i \subset 
\alpha$}) Let us consider at first only the case of one $\beta_i \subset 
\alpha$, as in the above Section~\ref{sec:R-below-S}. Let us simply write 
$\beta=\beta_i$, and $\infty=\oci{i}$. Since no still available witness for 
$\alpha$ is $\Gamma_\beta$-cleared, then some new $\Delta_{\beta}$-setup 
$(z,x)$ has emerged (here $x$ is the last appointed witness of $\alpha$, 
which has now become realized), and it will now be exploited by $\alpha$ to 
promote the $\Delta$-strategy of $\beta$ (as described in 
Section~\ref{sec:R-below-S}.(a)), in particular $\alpha$ takes outcome 
$\infty$, and the number $x$ is added to the stream of $\alpha \tie 
\ang{\infty}$. If $\infty$ is the true outcome, then $\alpha \tie 
\ang{\infty}$ builds an infinite stream, it does not meet its requirement 
(because the elements of the stream may be moved in or out of $A$ by lower 
priority strategies), it kills $\Gamma_{\beta}$ (this solves the problems 
outlined in Section~\ref{sec:R-below-S}.(c) when analyzing the case of the 
setup $(z,x)$ being possibly endangered by the appearance of an axiom 
$\ang{z, \emptyset \oplus \set{d}} \in \Phi_e$ with $d \in D$, and $d$ being 
a $\gamma_\beta(\cdot,x)$-marker) but it builds a correct $\Delta_{\beta}$. 
The requirements for $S_{\beta}$ and $R_\alpha$ will need to be reassigned 
anew to nodes below $\alpha \tie \ang{\infty}$. 

\ref{ssct:more-on-stragies}.(b) (\emph{Two $S$-strategies $\beta_0 \subset 
\beta_1 \subset \alpha$}) Let us consider now the case of two $S$-strategies 
$\beta_0 \subset \beta_1 \subset \alpha$ which are active along $\alpha$. 
Since no still available witness for $\alpha$ is both $\Gamma_{\beta_0}$-, 
and $\Gamma_{\beta_1}$-cleared, then there exists exactly one $i \in 
\set{0,1}$ such that some new $\Delta_{\beta_i}$-setup $(z,x)$ has emerged 
(again here $x$ is the last appointed witness which has now become realized; 
$i$ is unique because $d_x$ cannot be both a 
$\gamma_{\beta_0}(\cdot,x)$-marker and a $\gamma_{\beta_1}(\cdot,x)$-marker). 

\begin{enumerate}
  \item[(i)] If $(z,x)$ is a $\Delta_{\beta_1}$-setup then the situation is 
      similar to \ref{ssct:more-on-stragies}.(a) and $\alpha$ takes outcome 
      $\oci{1}$, kills $\Gamma_{\beta_1}$ but does not injure the 
      $\Gamma$-strategy of $\beta_0$, and builds $\Delta_{\beta_1}$. 
      Strategy $\beta_1$ ceases to be active along $\alpha \tie 
      \ang{\oci{1}}$, and, together with $R_\alpha$, the corresponding 
      requirement will need to be reassigned anew to nodes below $\alpha 
      \tie \ang{\oci{1}}$. 

  \item[(ii)] If $(z,x)$ is a $\Delta_{\beta_0}$-setup then $\alpha$ takes 
      outcome $\oci{0}$, it exploits (as in section 
      \ref{sec:R-below-S}.(a)) the setup $(z_0,x)$ to build 
      $\Delta_{\beta_0}$, it kills both $\Gamma_{\beta_0}$ and 
      $\Gamma_{\beta_1}$, and, through initialization, cancels 
      $\Delta_{\beta_1}$ as well. Strategies $\beta_0$ and $\beta_1$ cease 
      to be active along $\alpha \tie \ang{\oci{0}}$, and, together with 
      $R_\alpha$, the corresponding requirements will need to be reassigned 
      anew to nodes below $\alpha \tie \ang{\oci{0}}$. 
\end{enumerate}

\ref{ssct:more-on-stragies}.(c) (\emph{The general case}) In the general case 
of several $S$-strategies $\beta_0 \subset \beta_1 \subset \dots \subset 
\beta_{n-1} \subset \alpha$, which are active along $\alpha$, the situation 
is similar to Case~\ref{ssct:more-on-stragies}.(b)(ii). Strategy $\alpha$ 
picks the unique~$\beta_i$ for which a new $\Delta_{\beta_i}$-setup emerges, 
and it now acts towards $\beta_i$ similarly to how $\alpha$ acts 
towards~$\beta_0$ as we have described in 
\ref{ssct:more-on-stragies}.(b)(ii); it acts towards each $\beta_j$ with 
$j>i$ similarly to how $\alpha$ acts towards $\beta_1$ as described above in 
\ref{ssct:more-on-stragies}.(b)(ii); it does not injure the $\Gamma$-strategy 
of any $\beta_j$ with $j<i$, similarly to the way $\alpha$ does not injure 
the $\Gamma$-strategy of $\beta_0$, as we have described above in 
\ref{ssct:more-on-stragies}.(b)(i). 

\subsection{The construction}

The construction is in stages. At the end of each stage $s$ we will have 
defined a cofinite approximation $A_s$ to the final desired set $A$, which 
will be eventually defined as 
\[
A=\set{x: \left(\exists t \right)\left(\forall s \geq t \right)[x \in A_s]},
\]
and a cofinite approximation $D_s$ to the final desired set $D$,  which will 
be eventually defined as 
\[
D=\set{x: \left(\exists t\right)\left(\forall s \geq t\right)[x \in D_s]}.
\]

Each stage $s>0$ consists of substages $t$, with $t\leq u$ for some $u \leq 
s$. At the end of each substage $t$ of stage $s$, we appoint some string 
$\alpha_{s,t}\in T$, with $|\alpha_{s,t}| = t$, which extends the strings 
appointed at the previous substages of stage $s$.  If $t=s$ or $\alpha_{t-1}$ 
has stopped the stage $s$ after appointing $\alpha_{t}$, then we go to stage 
$s+1$. At the end of stage $s$ we define $f_s$ to be the string appointed at 
the end of the last substage of stage $s$. We say that a stage $s$ is 
\emph{$\alpha$-true} if $\alpha \subseteq f_s$, i.e.\ $\alpha$ is appointed 
at some substage of stage $s$. 

\emph{The parameters and their approximations.} Each node $\alpha$ has its 
own parameters, which may include: a \emph{current witness} $x_\alpha$; 
$s$-operators $\Delta_\alpha$, $\Gamma_\alpha$; a set of numbers, called the 
\emph{stream of $\alpha$} and denoted by $\st(\alpha)$; and 
$\gamma_\alpha$-markers. Within any given stage $s$, the parameters are 
updated substage by substage, so that they get a corresponding approximation 
at the end of each substage $t$. If $s> 0$ is any stage then at the beginning 
of substage $t=0$ of stage $s$ the various sets, parameters, or operators, 
together with the current values of $A$ and $D$, are understood to be 
evaluated as at the end of stage $s-1$ (i.e., by taking the approximations 
defined at the end of the last substage of stage $s-1$); at the beginning of 
a substage $t>0$ of stage $s$, they are understood to be evaluated as at the 
end of substage $t-1$. We \emph{warn} the reader that throughout the 
construction we will be very lazy in specifying stages or substages at which 
the various parameters are approximated, confident that this will be clear 
from the context. Thus for instance, at the beginning of a substage $t>0$, by 
$\Phi_e(A \oplus D)$ we denote the current approximation to this set under 
the conventions above specified, meaning in fact the set 
$\Phi_{e,s}(A_{s,t-1} \oplus D_{s,t-1})$, where $A_{s,t-1}$ and $D_{s,t-1}$ 
are the values of $A$ and $D$ at the end of substage $t-1$ of stage $s$ (with 
$A_{s,0}=A_{s-1}$ if $s>0$, and $A_{0,0}=\omega$, and similarly, 
$D_{s,0}=D_{s-1}$ if $s>0$, and $D_{0,0}=\omega$). Finally, we define 
$A_s=A_{s,u}$ and $D_s=D_{s,u}$, where $u$ is the last substage of stage $s$. 

\begin{definition}\label{def:aux}[\emph{New numbers}, 
\emph{initialization}, \emph{cancellation}, \emph{suitable witnesses}] 
\begin{itemize}
\item A number is \emph{new at stage $s$} if it is greater than any number 
    mentioned so far in the construction. 

\item To \emph{initialize a node $\alpha$ at the end of stage $s$} means to 
    \emph{cancel} the values of all its parameters, which thus start again 
    at next stage as undefined, or (if they are $s$-operators or the stream  
    $\st(\alpha)$) as empty sets. At the end of stage $0$, all nodes are 
    initialized. At the end of stage $s$ all strategies $\alpha > f_s$ are 
    initialized. If an $R$-node $\alpha$ is initialized then all the 
    witnesses which have been chosen for $\alpha$ before this 
    initialization are \emph{canceled} (meaning that their current values 
    are cancelled). 

\item For $s>0$ define $p^\alpha_s=\max (|\alpha|, u)$, where $u < s$ is 
    the last stage at which~$\alpha$ has been initialized.  We say that a 
    number $x$ is \emph{suitable to be picked as a witness by an $R$-node 
    $\alpha$ at stage $s$} if the following conditions hold: $x \in A \cap 
    \st(\alpha)$ and $x \geq p_s^{\alpha \tie \ang{\ocwait}}$. Thus, the 
    following hold: $x > \left|\alpha \right|$; $x$ is greater than any 
    number in any stream $\st(\alpha\tie \ang{o})$ as it was defined before 
    $\alpha \tie \ang{\ocwait}$ was last initialized; and consequently, $x$ 
    is greater than any witness previously picked by $\alpha$. We also 
    observe that at each stage $s$, we have $\st_s(\alpha)\subseteq 
    [p^\alpha_s, s)$, where following the usual interval notation we denote 
    $[u,v)=\set{z: u\leq z < v}$. 
	\end{itemize}

\end{definition}

\emph{Stage $s = 0$}: Let $A_{0}=\omega$ and $D_{0}=\omega$. Initialize all 
$\alpha \in T$. 

\emph{Stage $s > 0$} and \emph{substage $t \leq s$}.

If $t=s$ then go to stage $s+1$.

If $t=0$ then let $\alpha_{s,0}=\lambda$, and $\st_{s,0}(\lambda) = [0, s)$, 
where $[0,s)=\set{x: x <s}$. 

If $0<t<s$ then assume that  $\alpha_{s,t-1}$ is the string appointed at 
substage $t-1$: for simplicity we will write $\alpha= \alpha_{s,t-1}$ and 
$\alpha_t = \alpha_{s, t}$. Assume also that the stage has not been stopped 
after defining~$\alpha$. We are now going to appoint $\alpha_t$. Unless 
otherwise specified, at the end of substage $t$ we will set 
$\st(\alpha_t)=\st(\alpha)\cap [p^{\alpha_t}_s, s)$. 

\underline{Case 1}: $\alpha$ is an $S_{e}$-strategy. This strategy consists 
in \emph{maintaining $\Gamma_\alpha$}. For each $\ang{z, F \oplus G} \in 
\Phi_e$ do the following: If $F = \emptyset$, enumerate the axiom $\ang{z, 
G}$ into $\Gamma$.  Otherwise $F = \set{x}$ for some $x$.  If $x \in A$, $x < 
s$, and the $\gamma$-marker $\gamma_\alpha(z, x)$ is not defined, define  
$\gamma_\alpha(z, x)$ to be a new number, and enumerate the axiom $\ang{z, 
\set{\gamma_\alpha(z, x)}}$ into $\Gamma$.  If $x \notin A$ and the 
$\gamma$-marker $\gamma_\alpha(z, x)$ is defined, extract $\gamma_\alpha(z, 
x)$ from~$D$, and cancel $\gamma_\alpha(z, x)$. 

\underline{Case 2}: $\alpha$ is an $R_{k}$-strategy. Let $\set{\beta_i: 
i<n}$, with $n \geq 0$, be the set consisting of all the $S$-strategies 
$\beta_0 \subset \beta_1 \subset \dots \subset \beta_{n-1} \subset \alpha$ 
that are active along $\alpha$. For each $i \in \set{0, \ldots, n-1}$ we 
let~$\Gamma_i$ denote the $s$-operator $\Gamma_{\beta_i}$ which is 
constructed by $\beta_i$, $\Phi_i$ represent~$\Phi_{\beta_i}$, and the 
symbols $\gamma_i(\cdot,\cdot)$, instead of $\gamma_{\beta_i}(\cdot,\cdot)$, 
denote the $\gamma$-markers appointed by $\beta_i$. Finally, we write 
$\Delta_i$ to represent the $s$-operator built by $\alpha$ when taking the 
outcome $\oci{i}$ towards satisfaction of the $S$-requirement associated 
with~$\beta_i$, via the $\Delta$-strategy of $\beta_i$. 

\underline{Case 2.1}: $\alpha$ has \emph{stopped}, i.e.\ there have been 
previous $\alpha$-true stages, at the last $\alpha$-true stage~$s^-$ we had 
$\alpha \tie \ang{\ocstop} \subseteq f_{s^-}$, and $\alpha$ has not been 
initialized since then. End the substage, and set $\alpha_t=\alpha \tie 
\ang{\ocstop}$. 

\underline{Case 2.2}: Otherwise. We further distinguish the following 
subcases: 

\underline{Case 2.2.1}: $x_\alpha$ is undefined. Define $x_\alpha$ to be the 
least number (if there is any such number) which is suitable to be picked as 
a witness by $\alpha$ at stage $s$ (see Definition~\ref{def:aux}): thus 
$x_\alpha$ becomes the new \emph{current witness}. Whether or not a new 
$x_\alpha$ is appointed, stop the stage, and set $f_s = \alpha_t = \alpha 
\tie \ang{\ocwait}$. 

\underline{Case 2.2.2}: $x_{\alpha}$ is defined and $x_\alpha \in A 
\smallsetminus \Phi_k(D)$. End the substage, and set $\alpha_t=\alpha \tie 
\ang{\ocwait}$. 

\underline{Case 2.2.3}: Otherwise $x_\alpha$ is defined and $x_\alpha \in A 
\cap \Phi_k(D)$ due to some axiom $\ang{x_\alpha, F} \in \Phi_k$, with $F 
\subseteq D$. We call $x_{\alpha}$ a \emph{realized} witness. Pick the finite 
set $F \subseteq D$ such that $\ang{x_\alpha,  F} \in \Phi_k$ and $F$ has 
been a subset of $D$ for the longest time (i.e. among such finite sets, $F$  
has least stage $u\leq s$ such that $F\subseteq D_t$ for every $u\leq t \leq 
s$), and call it $F_{x_\alpha}$. We distinguish further subcases depending on 
$n$: 

\begin{definition}\label{def:i-cleared}
For each $x$ and $i < n$, let $\widehat{D}_{i, x} \subseteq D$ denote the set 
of $\gamma$-markers $\gamma_i(z, x)$ appointed by $\beta_i$ since its last 
initialization which are still defined, and let $\widehat{D}_x = \bigcup_{i < 
n}\widehat{D}_{i, x}$.  For each $d \in \widehat{D}_{i, x}$, let $z_{i, x}$ 
denote the unique $z$ such that $d = \gamma_i(z_{i, x}, x)$. We say that 
$x_\alpha$, as defined in Case 2.2.3, is \emph{$\Gamma_i$-cleared} if 
$F_{x_\alpha} = \emptyset$ or $F_{x_\alpha} = \set{d_{x_\alpha}}$ and either 
$d_{x_\alpha} \notin \widehat{D}_{i, x_\alpha}$ or $d_{x_\alpha} = 
\gamma_i(z_{i,x_\alpha}, x_\alpha) \in \widehat{D}_{i, x_\alpha}$ and $z_{i, 
x_\alpha} \in \Phi_i((A \smallsetminus \set{x_\alpha}) \oplus ((D 
\smallsetminus \widehat{D}_{x_\alpha}) \cup F_{x_\alpha}))$. 
\end{definition}

\underline{Case 2.2.3.1}: $n = 0$ or $x_\alpha$ is $\Gamma_i$-cleared for all 
$i < n$.  Extract $x_\alpha$ from $A$; if $F_{x_\alpha} = \set{d_{x_\alpha}}$ 
and $d_{x_\alpha} = \gamma_i(z_{i, x_\alpha}, x_\alpha)$ for some $i < n$, 
necessarily unique, then: cancel $\gamma_i(z_{i, x_\alpha}, x_\alpha)$; stop 
the stage; and set $f_s = \alpha_t = \alpha \tie \ang{\ocstop}$. 

\underline{Case 2.2.3.2}:  $n > 0$ and there is an $i < n$ and a 
witness $x \in \st(\alpha\tie \ang{\oci{i}})$ which is $\Gamma_i$-cleared.  
As necessary, do each of the following: set $x_\alpha = x$ (canceling the 
currently defined $x_\alpha$ if necessary); extract $x$ from $A$; cancel 
$d_x$ (as defined in Case 2.2.3.3) as a $\gamma_i$-marker; set $F_{x_\alpha} 
= \set{d_x}$; enumerate $d_x$ into $D$; stop the stage; and set $f_s = 
\alpha_t = \alpha \tie \ang{\ocstop}$. 

\underline{Case 2.2.3.3}: Otherwise, $n > 0$ and there exists an $i < n$, 
necessarily unique, such that $x_\alpha$ is not $\Gamma_i$-cleared. Then 
$F_{x_\alpha} = \set{d_{x_\alpha}}$ where $d_{x_\alpha} = \gamma_i(z_{i, 
x_\alpha}, x_\alpha)$ for some $\gamma_i$-marker.  For simplicity, we denote 
$x = x_\alpha$, $d_{x} = \gamma_i(z_x, x)$, and $z_x = z_{i, x_\alpha}$.  We 
call the pair $(z_x, x)$ a \emph{$\Delta_i$-setup} at $s$.  
\begin{enumerate}
\item[(a)] Add $x$ to $\st(\alpha\tie \ang{\oci{i}})$ and the axiom 
    $\ang{x, \set{z_x}}$ to $\Delta_{i}$; 

\item[(b)] (\emph{killing $\Gamma_j$}) For all $j \geq i$, extract from $D$ 
    and cancel all $\gamma_j$-markers $d = \gamma_j(z, x)$ with $x > 
    |\alpha|$ which were picked since the most recent stage at which either 
    $\alpha\tie\ang{\oci{i}}$ was last active or last initialized; 

\item[(c)] For all $y \in A \!\upharpoonright\! s \smallsetminus S(\alpha 
    \tie \ang{\oci{i}})$, enumerate $\ang{y, \emptyset} \in \Delta_i$ 
    (where, for any given $X \subseteq \omega$ and $x \in \omega$, we let 
    $X \!\upharpoonright\! x=X \cap [0,x)$); 

\item[(d)] Cancel $x_\alpha$;

\item[(e)] end the substage, and set $\alpha_t=\alpha \tie \ang{\oci{i}}$.
\end{enumerate}

Notice that $x_\alpha > \max\left(\st_{s-1}(\alpha \tie 
\ang{\oci{i}})\right)$ since $x_\alpha$ is greater than all previous 
witnesses appointed by $\alpha$ (as observed in Definition~\ref{def:aux}), 
and $\alpha \tie \ang{\oci{i}}$ consists of witnesses previously appointed by 
$\alpha$. 

At the end of each stage $s$, \emph{initialize} all $\beta \in T$ having
lower priority than $f_s$.

\subsection{Verification}
It easily follows from the construction that if $\alpha \in T$ and $s$ is an 
$\alpha$-true stage then $\max(\st_s(\alpha))<s$. Indeed, (letting $\alpha^-$ 
denote the predecessor of $\alpha$, along $\alpha$) it is easy to see that 
$\st_s(\alpha) \subseteq \st_s(\alpha^-)$, and thus, by induction, 
$\st_s(\alpha) \subseteq \st_s(\lambda)$.

\begin{lemma}[Tree Lemma]\label{lem:tree-lemma}
\begin{enumerate}

\item Let $g$ be an infinite path through $T$. Then for every requirement 
    $U_e$, there exists a node $\alpha \subset g$ such that $\alpha$ is 
    assigned requirement $U_e$ and for every node $\beta$ with $\alpha 
    \subset \beta \subset g$, $\beta$ is not assigned requirement $U_e$ and 
    there exists a node $\delta$, with $\alpha \subseteq \delta \subset g$, 
    such that $\alpha$ is either active along all $\beta$ with $\delta 
    \subset \beta \subset g$, or satisfied along all such $\beta$. 

\item (Existence of True Path) There exists an infinite path $f$ through 
    $T$ (called the \emph{true path} of the construction) such that for 
    every $n$, the following hold:
     
\begin{enumerate}
\item There exists a last stage $t_n$ at which $f\!\upharpoonright\! n$ 
    is initialized (as a consequence we have that 
    $p^{f\!\upharpoonright\! n}_{fin}=\lim_s p^{f\!\upharpoonright\! 
    n}_s=\max\set{t_n, n}$); 

\item If a number $x$ is enumerated into $\st(f\!\upharpoonright\! n)$ 
    after $t_n$ then $x$ never leaves $\st(f\!\upharpoonright\! n)$ 
    thereafter; therefore the limit value of $\st(f\!\upharpoonright\! 
    n)$ (i.e. the set of numbers $x$ which have been enumerated after 
    $t_n$) is a c.e.\ (in fact, decidable) set, which is included in 
    $[p^{f\!\upharpoonright\! n}_{fin}, \infty)$; 

\item At cofinitely many $f\!\upharpoonright\! n$-true stage $s > t_n$ a 
    still available element $x> \max(\st_{s-1}(f\!\upharpoonright\! n))$ 
    (assume $\max (\emptyset)=-1)$) is enumerated into 
    $\st(f\!\upharpoonright\! n)$. 
 \end{enumerate}

\end{enumerate}
\end{lemma}

\begin{proof}
\begin{enumerate}

\item The proof is by induction on the priority ranking of the 
    requirements. 

\item It is enough to exhibit a set of strings $\set{\alpha_n: n\in 
    \omega}$ and a set $\set{t_n: n \in \omega}$ of stages such that the 
    following hold, for every $n$:
     
\begin{itemize}
    \item[(i)] $|\alpha_n| = n$ and $\alpha_n \subset \alpha_{n+1}$ for 
        every $n$; 

    \item[(ii)] there are infinitely many $\alpha_n$-stages; 
    
    \item[(iii)] $t_n$ is the last stage at which $\alpha_n$ is 
        initialized; 

    \item[(iv)] at cofinitely many $\alpha_n$-true stages $s > t_n$ we 
        have that a number $x \geq p^{\alpha_n}_{fin}$ enters 
        $\st(\alpha_n)$ such that  $x > \max \st_{s-1}(\alpha_n)$ and $x$ 
        never leaves $\st(\alpha_n)$ thereafter. Thus, by initialization 
        $\st(\alpha_n)\subseteq [p^{\alpha_n}_{fin}, \infty)$; 

    \item[(v)] at cofinitely many $\alpha_n$-stages we have $\alpha_n
        \subset f_s$.
\end{itemize}
Therefore, the infinite path $f=\bigcup_n \alpha_n$ (hence 
$f\!\upharpoonright\! n=\alpha_n$) is the desired infinite path 
through~$T$. 

We define $\alpha_n$ and $t_n$ by induction on $n$.

If $n = 0$ then take $\alpha_0 = \lambda$, and $t_0 = 0$. Clearly 
$\alpha_0$ and $t_0$ satisfy (i)--(iv) since if $s > 0$, then $s - 1 \in 
St_s(\lambda) \smallsetminus St_{s-1}(\lambda)$. Since we may assume that 
$\lambda$ is an $S$-strategy, then (v) holds as well for $\lambda$. 

Next, assume by induction that we have defined $\alpha_n$ and $t_n$ 
satisfying (i)--(v). Since the tree is finitely branching, and there are 
infinitely many stages $s$ such that $\alpha_n \subset f_s$, there is a 
string $\alpha_{n+1} \supset \alpha_n$ with infinitely many stages $s$ such 
that $\alpha_{n+1} \subset f_s$ and a last stage $t_{n+1} \geq t_n$ at 
which $\alpha_{n+1}$ is initialized.  This shows (i), (ii) and (iii) for 
$\alpha_{n+1}$ and $t_{n+1}$. 

Next, we show that  at cofinitely many $\alpha_{n+1}$-true stages $s> 
t_{n+1}$ we enumerate into $\st(\alpha_{n+1})$ some $x \geq 
p^{\alpha_{n+1}}_{fin}$ greater than all the elements currently in the 
stream. Indeed, if $\alpha_{n+1}=\alpha_n \tie \ang{o}$, with $o \in 
\set{\ocinf, \ocstop, \ocwait}$ then the claim easily follows by the 
inductive assumptions on $\alpha_n$, since, by construction at 
$\alpha_{n+1}$-true stages we have $\st(\alpha_{n+1})= \st(\alpha_n) \cap 
[p^{\alpha_{n+1}}_{fin}, \infty)$, and by induction greater and greater 
numbers are enumerated at  cofinitely many $\alpha_{n}$-true stages into 
$\st(\alpha_n)$. On the other hand, if $\alpha_{n+1}={\alpha_{n}} \tie 
\ang{\oci{i}}$ for some~$i$, then Case~2.2.3.3(a) applies.  The number $x$ 
which is added at any stage $s$ to $\st(\alpha_{n} \tie \ang{\oci{i}})$ in 
this case is in fact greater than or equal to $p^{\alpha_{n+1}}_s$ by 
construction, being greater than or equal to $p^{{\alpha_{n}} \tie 
\ang{\ocwait}}_s$. It follows that $\st(\alpha_{n+1}) \subseteq 
[p^{\alpha_{n+1}}_{fin},\infty)$ and hence (iv) holds. 

Finally, a stage is terminated prematurely only in Cases 2.2.1, 2.2.3.1, or 
2.2.3.2.  Case~2.2.1 can only happen finitely many times after $t_{n+1}$, 
since, as we have just seen, at cofinitely many $\alpha_{n+1}$-true stages 
after the last initialization of $\alpha_{n+1}$, a still available element 
greater than all the elements already in the stream enters the stream 
$\st(\alpha_{n+1})$. Therefore (thus showing (v) for $\alpha_{n+1}$) there 
exists a stage $t \geq t_{n+1}$ such that $\alpha_{n+1} \subset f_s$ at 
every $\alpha_{n+1}$-true stage $s \geq t$. Likewise, Cases~2.2.3.1 and 
2.2.3.2 can only happen once for $\alpha_{n+1}$ after $t_{n+1}$. 
\end{enumerate}
\end{proof}

\begin{definition}
At every stage $s$ of the construction, let $\widehat{\gamma}_s$ denote the 
maximum of any $D$-use in any $\Phi_k$ or $\Phi_e$ operator and any 
$\gamma$-marker defined up through and including stage $s$. 
\end{definition}

\begin{lemma}\label{lem:helper}
\begin{enumerate}
\item Every number $d$ can be chosen to be a $\gamma$-marker for some 
    $S$-strategy at most once.
    
\item If $F_{x_\alpha} = \set{d}$ in Case 2.2.3 of the construction, and 
    $d$ has not been chosen to be a $\gamma$-marker for any $S$-strategy, 
    then $d$ will never be chosen as a $\gamma$-marker.
     
\item If $d \in D_{s_0} \!\upharpoonright\! (\widehat{\gamma}_{s_0} + 1)$ 
    at some stage $s_0$ and $d$ is not a $\gamma$-marker at stage $s_0$, 
    either due to never being chosen or being canceled as a $\gamma$-maker 
    at some stage $s < s_0$, then $d \in D_s$ for all $s \geq s_0$. 
      
\item If $d$ is chosen to be a $\gamma$-marker for an $S$-strategy which is 
    later initialized at some stage $s_0$, then $D(d)[s] = D(d)[s_0]$ for 
    all $s \geq s_0$.
     
\item If $\beta \subset f_s$ is an $S$ strategy, the marker 
    $\gamma_\beta(z,x)$ is still defined at the end of stage $s$, and 
    $\gamma_\beta(z, x) \in D_s$, then $x \in A_s$. 
\end{enumerate}
\end{lemma}

\begin{proof} 
Since $\gamma$-markers are chosen to be larger than any number seen so far in 
the construction, and only defined $\gamma$-markers are extracted from $D$ 
due to the $\Gamma$-correction\big/$\Gamma$-killing actions of $R$ and $S$ 
strategies, (1)-(4) are immediate.  To show (5), first note that if $\beta$ 
saw $x \notin A$ at substage $|\beta|$ of stage $s$, then $\beta$ would 
cancel the $\gamma_\beta$-marker $\gamma_\beta(z,x)$ and extract it from $D$. 
Likewise, $\beta$ would only define and enumerate $\gamma_\beta(z,x)$ into 
$D$ at stage $s$ if it saw $x \in A$.  If at a later substage $x$ were 
extracted from $A$, as in Cases 2.2.3.1 or 2.2.3.2, then $\gamma_\beta(z, x)$ 
would be not be defined. 
\end{proof}

\begin{lemma} \label{lem:co-2-ce}
The set $A$  is $\Pi^0_1$ and the set $D$ is co-2-c.e.
\end{lemma}

\begin{proof}
First we show that $A$ is $\Pi^0_1$. Given a number $x$, we start with $x \in 
A$.  The number $x$ may be extracted from $A$ by an $R$-strategy $\alpha$ at 
a stage $s_0$ only via application of Case~2.2.3.1. But this can be done at 
most once since, after acting in Case~2.2.3.1, $x$ is never re-enumerated 
back into $A$ and consequently is no longer a suitable witness. 

We now show that $D$ is co-2-c.e.\ Given a number $d$, we start with $d \in 
D$.  By Lemma~\ref{lem:helper}(2), if $d$ is never chosen as a 
$\gamma$-marker, $d$ is never extracted from $D$.   If $d$ is chosen as a 
$\gamma$-marker, then $d$ can be extracted from $D$ for $\Gamma$-correction 
either in Case 1 or in Case 2.2.3.3(b), at which point $d$ is canceled as a 
$\gamma$-marker.  The number $d$ may then be enumerated back into $D$ via 
$\alpha$ in Case 2.2.3.2.  By Lemma~\ref{lem:helper}(3), $d$ cannot be 
extracted again. 
\end{proof}

Let $f$ be the true path whose existence is guaranteed by 
Lemma~\ref{lem:tree-lemma}(2). If $\alpha \subset f$ (and no reference to any 
specific stage of the construction is understood) then we denote by 
$\st(\alpha)$ the set of numbers enumerated in $\st_s(\alpha)$ at some stage 
after the last initialization of $\alpha$.

\begin{lemma}\label{lem:true-stage}
Let $\alpha \subset f$ be any strategy, and let $s_0$ be the least stage 
after which $\alpha$ is never initialized.  At every $\alpha$-true stage $s 
\geq s_0$, for all $x \in [0, s) \smallsetminus S(\alpha)$, $A_s(x) = A(x)$.  
Additionally, if $\alpha$ is an $R$-strategy and $\alpha \tie \ang{o} \subset 
f$ for $o \in \set{\ocstop, \ocwait}$, let $s_1 \geq s_0$ be the stage at 
which $\alpha$ first took on outcome $o$ with its final diagonalization 
witness $x_\alpha$.  Then $A_{s_1} \!\upharpoonright\! s_1 = A 
\!\upharpoonright\! s_1$. Furthermore, in the case that $o = \ocstop$, for 
every $s \geq s_1$, $(D_{s_1} \!\upharpoonright\! (\widehat{\gamma}_{s_1} + 
1) \smallsetminus \widehat{D}_{x_\alpha, s_1}) \cup F_{x_\alpha} \subset 
D_s$, where $\widehat{D}_{x_\alpha}$ is as defined in 
Definition~\ref{def:i-cleared} and $F_{x_\alpha}$ is as defined in either 
Case 2.2.3.1 or Case 2.2.3.2 at stage $s_1$. 
\end{lemma}

\begin{proof}
Let $\alpha \subset f$ and let $s_0$ be the least stage after which $\alpha$ 
is never initialized.  Choose $x \notin S(\alpha)$ and let $s_1$ be the least 
$\alpha$-true stage such that $s_1 \geq s_0$ and $s_1  > x$. Since $A$ is 
$\Pi^0_1$, if $x \notin A_{s_1}$, then $x \notin A$. Thus, we can assume $x 
\in A_{s_1}$.  No $S$-strategy can extract $x$ from $A$ and so we consider 
only $R$-strategies $\beta$. No strategy $\beta <_L \alpha$ is eligible to 
act after stage $s_0$ and so cannot extract $x$ from $A$. Every strategy 
$\beta 
>_L \alpha$ was initialized at stage $s_1$ and so any suitable witness chosen 
by such a $\beta$ after stage $s_1$ will be greater than $s_1 > x$, implying 
that $\beta$ cannot extract $x$ from $A$ after stage $s_1$.  Since $x \notin 
S(\alpha)$, no $\beta \supseteq \alpha$ can choose $x$ as a witness and so 
cannot extract $x$ from $A$. Finally, consider $\beta \subset \alpha$.	If 
$\beta \tie \ang{\ocstop} \subseteq \alpha$, then~$\beta$ does not extract 
any element from~$A$ after stage~$s_0$.  If $\beta \tie \ang{\ocwait} 
\subseteq \alpha$, then~$\beta$ cannot extract $x$ from~$A$ after stage $s_0$ 
without initializing $\alpha$, contrary to assumption. If $\beta \tie 
\ang{\oci{i}} \subseteq \alpha$, than any witness $y$ chosen by $\beta$  at 
any stage $s > s_1$ must be suitable for $\beta$, hence $y\ge p_s^{\beta \tie 
\ang{\ocwait}} \ge p_{s_1}^{\beta \tie \ang{\ocwait}}\ge s_1 > x$, and 
finally $y>x$. Similarly, $\beta$ cannot extract any element from $S(\beta 
\tie \ang{\oci{j}})$ for $j \leq i$ without initializing $\alpha$, while 
$\beta \tie \ang{\oci{j}}$ was initialized at stage $s_1$ and so any element 
of $S(\beta \tie \ang{\oci{j}})$ is greater than or equal to $s_1 > x$. 
Therefore, no strategy extracts $x$ from $A$ after stage $s_1$, and so $x \in 
A$. 

Now assume $o \in \set{\ocstop, \ocwait}$ and let $s_1 \geq s_0$ be the stage 
at which $\alpha$ first took on outcome $o$ with its final diagonalization 
witness $x_\alpha$.  The proof that $A_{s_1} \!\upharpoonright\! s_1 = A 
\!\upharpoonright\! s_1$ is analogous to the proof in the above paragraph, 
noting that $S_{s_1}(\alpha \tie \ang{o}) = \emptyset$, and that if $x_\alpha 
\in A_{s_1}$ then $o = \ocwait$ and so $x_\alpha$ cannot be extracted from 
$A$ without causing $\alpha \tie \ang{\ocwait}$ to be initialized, contrary 
to assumption. Finally, assume $o = \ocstop$ and let $s_1$, $x_\alpha$, 
$\widehat{D}_{x_\alpha, s_1}$ and $F_{x_\alpha}$ be as in the assumption of 
the lemma.  Consider $d \in (D_{s_1}\!\upharpoonright\! 
(\widehat{\gamma}_{s_1} + 1) \smallsetminus \widehat{D}_{x_\alpha, s_1}) \cup 
F_{x_\alpha}$.  By Lemma~\ref{lem:helper}(3), if $d$ is not a $\gamma$-marker 
at stage $s_1$, then $d \in D_s$ for all $s \geq s_1$.  So, we may assume 
that $d$ is a $\gamma_\beta$-marker at stage $s_1$ for some $S$-strategy 
$\beta$.  If $\beta <_L \alpha$, then~$\beta$ will not act after stage $s_1$, 
and so cannot extract~$d$ from~$D$.  If $\beta \geq \alpha\tie\ang{\ocstop}$, 
then~$\beta$ was initialized at stage $s_1$ and so by 
Lemma~\ref{lem:helper}(4) we have $d \in D$.  If $\beta \subset \alpha$, then 
$d = \gamma_\beta(z, x_d)$ for some $x_d \in A$ (when $d$ is appointed) by 
Lemma~\ref{lem:helper}(5), and $d = \gamma_\beta(z, x_d)$ was defined at some 
stage $s$ with $x_d < s \leq s_1$, and so (again by 
Lemma~\ref{lem:helper}(5)) $x_d \in A_{s_1} \!\upharpoonright\! s_1$.  This 
implies that $x_d \in A$ and thus $\beta$ will not extract $d$ from~$D$ due 
to $\Gamma$-correction. 

Now consider the possibility that $d$ is extracted from~$D$ due to the 
$\Gamma$-killing action of some $R$-strategy $\delta \supset \beta$.  Since 
$\delta$ only extracts $\gamma$-markers from~$D$ which were defined since 
$\delta$'s last initialization, necessarily we must have $\beta \subset 
\delta \subset \alpha$ and that for some $i$,  $\beta = \beta_i$ for 
$\delta$.  If $\delta \tie \ang{\oci{i}} <_L \alpha$, $\delta \tie 
\ang{\oci{i}}$ is not active after stage $s_1$, and so $\delta$ does not kill 
$\Gamma_\beta$ after stage $s_1$.  On the other hand, if $\delta \tie 
\ang{\oci{i}} \subseteq \alpha$ or $ \delta \tie \ang{\oci{i}} >_L \alpha$, 
then any $\Gamma_i$-killing action after stage $s_1$ will extract from $D$ 
only $\gamma_\beta$--markers defined after stage $s_1$, as it extracts only 
$\gamma_\beta$-markers which were picked up since the most recent stage at 
which either $\delta \tie \ang{\oci{i}}$ was last active or last initialized. 
\end{proof}

\begin{lemma}  
Every $R$-requirement is satisfied.
\end{lemma}

\begin{proof}
Fix an $R$-requirement $R_{k}$. By Lemma~\ref{lem:tree-lemma}(1) of the Tree 
Lemma, there exists a longest $\alpha \subset f$ such that $\alpha$ is 
assigned the requirement $R_k$, and for all $\beta$ with $\alpha \subset 
\beta \subset f$, $\alpha$ is satisfied along $\beta$. Fix such an $\alpha 
\subset f$.  Then $\alpha \tie \ang{o} \subset f$ for $o \in \set{\ocstop, 
\ocwait}$. By Lemma~\ref{lem:tree-lemma}(2)(c), $\alpha$ eventually picks 
from the numbers which are suitable witnesses for it, a witness which becomes 
its eventual diagonalization witness. Let $s_0$ be the least stage after 
which $\alpha$ is never initialized and has picked its final diagonalization 
witness $x_\alpha$. 

If $\alpha \tie \ang{\ocwait} \subset f$, then at co-finitely many 
$\alpha$-true stages $s > s_0$, $x_\alpha \in A_s \smallsetminus 
\Phi_k(D)[s]$. Since $A \in \Pi^0_1$ (Lemma~\ref{lem:co-2-ce}), $\alpha$ 
never extracts $x_\alpha$ from $A$. On the other hand, $x_\alpha \notin 
\Phi_k(D)$, and so $R_k$ is satisfied. 

If $\alpha \tie \ang{\ocstop} \subset f$, let $s_1 \geq s_0$ be the 
$\alpha$-stage at which Case~2.2.3 applied and $x_\alpha$ became a realized 
witness via some axiom $\ang{x_\alpha, F_{x_\alpha}} \in \Phi_k$ with 
$F_{x_\alpha} \subseteq D$.  Let $s_2 \geq s_1$ be the $\alpha$-stage at 
which Case~2.2.3.1 or Case~2.2.3.2 applied.  By Lemma~\ref{lem:true-stage}, 
$F_{x_\alpha} \subseteq D_s$ for all $s \geq s_2$, ensuring that $x_\alpha 
\in \Phi_k(D)[s]$ for all $s \geq s_2$.  Likewise, by the action of the 
construction in Cases 2.2.3.1 and Case 2.2.3.2, $x_\alpha \notin A_{s_2}$ 
and, again by Lemma~\ref{lem:co-2-ce}, $x_\alpha \notin A_s$ for all $s \geq 
s_2$.  Thus $x_\alpha \in \Phi_k(D) \smallsetminus A$. 
\end{proof}

\begin{lemma}\label{lem:S-satifaction2}
Every $S$-requirement is satisfied.
\end{lemma}

\begin{proof}
Fix an $S$-requirement $S_e$.  By the Tree Lemma 
(Lemma~\ref{lem:tree-lemma}), there exists a longest $\beta \subset f$ such 
that $\beta$ is assigned the requirement $S_e$ and $\beta$ is either active 
along $\alpha$ for all $\alpha$ with $\beta \subset \alpha \subset f$, or 
there is a minimal $\alpha$ with $\beta \subset \alpha \subset f$ and $\beta$ 
is active along all $\delta$ with $\beta \subseteq \delta \subseteq \alpha$ 
and is satisfied along all $\delta$ with $\alpha \subset \delta \subset f$.  
Fix such a $\beta \subset f$ and the last stage $s_0$ at which $\beta$ was 
initialized.  We consider both cases.
 
\emph{Case 1}: $\beta$ is active along $\alpha$ for all $\alpha$ with $\beta 
\subset \alpha \subset f$. We show that in this case $\Phi_e(A \oplus B) = 
\Gamma_\beta(D)$, where $\Gamma_\beta$ is the $s$-operator consisting of all 
the axioms which were enumerated into $\Gamma_\beta$ after stage $s_0$.	By 
construction, $\Phi_e(\emptyset \oplus D) \subseteq \Gamma_\beta(D)$.  Since 
$\Phi_e(A \oplus D) = \Phi_e(A \oplus \emptyset) \cup \Phi_e(\emptyset \oplus 
D)$,  we only need to show that $\Phi_e(A \oplus \emptyset) \smallsetminus 
\Phi_e(\emptyset \oplus D)  = \Gamma_\beta(D) \smallsetminus \Phi_e(\emptyset 
\oplus D)$.

First, choose $z \in \Phi_e(A \oplus \emptyset) \smallsetminus 
\Phi_e(\emptyset \oplus D)$.  Consider an axiom $\ang{z, \set{x} \oplus 
\emptyset} \in \Phi_e$ with $x \in A$. Let $s_1 \geq s_0$ be the least 
$\beta$-true stage at which $\ang{z, \set{x} \oplus \emptyset} \in \Phi_e$.  
Since $x \in A$ and $A$ is $\Pi^0_1$, at every $\beta$-true stage $s \geq 
s_1$, $z \in \Phi_e(A \oplus \emptyset)$.  Thus, at every $\beta$-true stage 
$s \geq s_1$, if $\gamma_\beta(z, x)$ is undefined, $\beta$ will define a new 
$\gamma_\beta(z, x)$-marker, enumerate $\gamma_\beta(z, x)$ into $D$, and 
enumerate the axiom $\ang{z, \set{\gamma_\beta(z, x)}}$ into $\Gamma_\beta$. 	

Since $x \in A$, $\beta$ will never extract $\gamma_\beta(z,x)$ from $D$.  
Thus $\gamma_\beta(z,x)$ can only be extracted from $A$ by an $R$-strategy 
$\alpha$ where $\beta \subset \alpha$, $|\alpha| < x$, $\beta$ is active 
along $\alpha$ (implying that $\beta$ is $\beta_i$ for $\alpha$ as in Case 2 
of the construction), and only if $\alpha \tie \ang{\oci{j}}$ is active for 
some $j \leq i$ and $\gamma_\beta(z, x)$ was chosen after $\alpha$'s last 
initialization and the last stage at which $\alpha \tie \ang{\oci{j}}$ was 
active: see Case 2.2.3.3(b) of the construction. 	

Let $s_2 \geq s_1$ be the least $f \!\upharpoonright\! x$-true stage after 
which $f \!\upharpoonright\! x$ is not initialized.  If $\gamma_\beta(z, x)$ 
is not defined at the beginning of stage $s_2$, $\beta$ will define a new 
$\gamma_\beta(z, x)$-marker at stage $s_2$.  We show that the 
$\gamma_\beta(z, x)$-marker which is defined, or already defined, at stage 
$s_2$ is never canceled or extracted from $D$: in the following, we examine, 
and exclude one by one, all possible cases that could lead to this 
extraction. No $\alpha <_L f \!\upharpoonright\! x$ will act after stage 
$s_2$ and so cannot extract $\gamma_\beta(z, x)$ from $D$.  Every $\alpha >_L 
f \!\upharpoonright\! x$ is initialized at stage $s_2$ and so no such 
$\alpha$ will be allowed to extract $\gamma_\beta(z, x)$ from $D$ at any 
$\alpha$-true stage $s > s_2$.  Any $\alpha \supset f \!\upharpoonright\! x$ 
cannot extract $\gamma_\beta(z, x)$ from $D$ since $|\alpha| > x$. We finally 
examine the case when $\beta$ is active along all $\alpha$ with $\beta 
\subset \alpha \subset f \!\upharpoonright\! x$. For any such $\alpha$, if 
$\alpha$ is an $R$-requirement then $\beta = \beta_i$ for some $i$.  The 
$\gamma_\beta$-marker $\gamma_\beta(z, x)$ can only be canceled and extracted 
from $D$ if $\alpha \tie \ang{\oci{j}}$ is active after stage $s_2$ for some 
$j \leq i$.  Since $\beta$ is active along $\alpha \tie \ang{o} \subseteq f 
\!\upharpoonright\! x$, it must be that $o \in \set{\ocstop, \ocwait} \cup 
\set{\oci{k}:k > i}$.  If $o = \ocstop$, then since $\alpha \tie 
\ang{\ocstop}$ is eligible to act at stage $s_2$, no strategy $\alpha \tie 
\ang{o'}$ for $o' \neq \ocstop$ is eligible to act after stage $s_2$.  If $o 
\neq \ocstop$, then no $\alpha \tie \ang{\oci{j}}$ for $j \leq i$ is eligible 
to act after stage $s_2$ as this would cause $\alpha \tie \ang{o}$ to be 
initialized, contrary to assumption. 		

Now choose $z \in \Gamma_\beta(D) \smallsetminus \Phi(\emptyset \oplus D)$.  
Then $z \in \Gamma_\beta(D)$ via some axiom $\ang{z, \set{d}} \in 
\Gamma_\beta$ with $d \in D$ where $\ang{z, \set{d}}$ was enumerated into 
$\Gamma_\beta$ at some stage $s \geq s_0$ at which $d$ was a 
$\gamma_\beta$-marker. We have two cases to consider. 		

\emph{Case 1.1}: There exists an axiom $\ang{z, \set{d}} \in \Gamma_\beta$ 
with $d \in D$ such that the $\gamma_\beta$-marker $d$ was never canceled.  
Let $s_1 \geq s_0$ be the $\beta$-true stage at which $d = \gamma_\beta(z, 
x)$ for some $x$ was defined. Thus, $\ang{z, \set{x} \oplus \emptyset} \in 
\Phi_{e, s_1}$ and $x \in A_{s_1}$.  This implies that $x \in A_{s}$ for 
every $s \ge s_1$, otherwise since $A \in \Pi^0_1$ there would exist a stage 
$s_2\ge s_1$ such that $x \notin A_s$ for every $s > s_2$, and by Case~1 of 
the construction at some $\beta$-true stage $s >s_2$ we would cancel the 
$\gamma_\beta$-marker $d$. Therefore $z \in \Phi(A\oplus \emptyset)$ as $x 
\in A$.		

\emph{Case 1.2}: For all axioms $\ang{z, \set{d}} \in \Gamma_\beta$ with $d 
\in D$, the $\gamma_\beta$-marker $d$ was canceled.  Fix a particular such 
axiom $\ang{z, \set{d}}$ and let $s_1 \geq s_0$ be the $\beta$-true stage at 
which $d$ was defined.  The $\gamma_\beta$-marker $d$ can only be canceled in 
Case 2.2.3.1, at which point it is restrained in $D$, or canceled and 
extracted from $D$ in Case 2.2.3.3 and then enumerated back into $D$ in Case 
2.2.3.2, by some $R$-strategy $\alpha \supset \beta$ where $\beta$ is active 
along $\alpha$.  Let $s_2 \geq s_1$ be the $\alpha$-true stage at which 
either Case 2.2.3.1 or Case 2.2.3.2 applied.  In either case, $\alpha$ has a 
defined diagonalization witness $x_\alpha$ with $F_{x_\alpha} = \set{d}$ 
which is $\Gamma_i$-cleared for all $\beta_i \subset \alpha$ which are active 
along $\alpha$.  In particular, $x_\alpha$ is $\Gamma_\beta$-cleared, which 
means that $z \in \Phi_e((A \smallsetminus \set{x_\alpha}) \oplus ((D 
\smallsetminus \widehat{D}_{x_\alpha}) \cup F_{x_\alpha})[s_2]$.  By 
Lemma~\ref{lem:true-stage}, this implies that $z \in \Phi_e(A \oplus D)$. 

\emph{Case 2}: There is a minimal $\alpha$ with $\beta$ active along all 
$\delta$ with $\beta \subseteq \delta \subseteq \alpha$ and satisfied along 
all $\delta$ with $\alpha \subset \delta \subset f$. 	

In this case, $\alpha$ is an $R$-strategy, $\beta$ is $\beta_i$ for $\alpha$ 
and $\alpha\tie\ang{\oci{i}} \subset f$.  We note that the operator $\Phi_e$ 
of $S$ is referred to as $\Phi_i$ by $\alpha$.  We show that 
$\Delta_i(\Phi_i(A \oplus D)) = A$ where $\Delta_i$ is the $s$-operator 
consisting of the axioms enumerated by $\alpha$ into $\Delta_i$ after the 
last initialization of $\alpha \tie \ang{\oci{i}}$.  Let $\beta_0 \subset 
\cdots \subset \beta_{n - 1} \subset \alpha$ be the set of $S$-strategies 
which are active along $\alpha$. Let $s_1$ be the last stage at which $\alpha 
\tie \ang{\oci{i}}$ was initialized. 

First, consider $x \in \omega \smallsetminus S(\alpha \tie \ang{\oci{i}})$.  
Let $s_2$ be the least $\alpha \tie \ang{\oci{i}}$-true stage such that $s_2 
> s_1, x$.  At no $\alpha \tie \ang{\oci{i}}$-true stage $s'_2$ with $s_1 
\leq s'_2 < s_2$, did $\alpha$ enumerate any axiom of the form $\ang{x, F}$ 
into $\Delta_i$. By Lemma~\ref{lem:true-stage}, $A_{s_2}(x) = A(x)$, and 
since $A$ is $\Pi^0_1$, at every stage $s \geq s_2$, $A_s(x) = A(x)$. If $x 
\in A_{s_2}$, then the axiom $\ang{x, \emptyset}$ is enumerated into 
$\Delta_i$ at stage $s_2$, giving $x \in \Delta_i(\Phi_i(A \oplus D))$.  On 
the other hand, if $x \notin A_{s_2}$, then at no stage $s \geq s_2$ do we 
see $x \in A_{s}$, and so $\alpha$ never enumerates an axiom of the form 
$\ang{x, F}$ into $\Delta_i$.  Thus $x \notin \Delta_i(\Phi_i(A \oplus D))$, 
showing that $A_{s_2}(x) = A(x) = \Delta_i(\Phi_i(A \oplus D))(x)$. 

Finally, consider $x \in S(\alpha \tie \ang{\oci{i}})$. At the stage $s_2 
> s_1$ at which $x$ was enumerated into $S(\alpha \tie \ang{\oci{i}})$ we 
saw $x \in \Phi_k(D)$ via an axiom $\ang{x, \set{d}} \in \Phi_k$ with $d \in 
D$ where $d = \gamma_i(z_x, x)$ was a $\gamma_i$-marker defined by $\beta_i$, 
and so we enumerated the axiom $\ang{x, \set{z_x}}$ into $\Delta_i$. Thus, if 
$x \in A$, then $z_x \in \Phi_i(A \oplus D)$ and so $x \in \Delta_i(\Phi_i(A 
\oplus D))$. Assume $x \notin A$. Then $x$ was extracted from $A$ at some 
stage $s_3 \geq s_2$. At the next $\alpha$-true stage $s_4 \geq s_3$, all the 
$\beta_j$-strategies will have canceled any $\gamma_j(\cdot, x)$-markers and 
extracted them from $D$. Furthermore, no new $\gamma_j(\cdot, x)$-markers 
will be defined after stage $s_2$. Thus, $\widehat{D}_x[s] = \emptyset$ for 
all stages $s \ge s_4$. Since $x$ is not $\Gamma_i$-cleared (as otherwise $x$ 
could be used as the final diagonalization witness of $\alpha$, which would 
initialize $\alpha \tie \ang{\oci{i}}$, contrary to assumption), we have 
\[
z_x \notin \Phi_i((A \smallsetminus \set{x}) \oplus ((D \oplus 
\widehat{D}_x) \cup \set{d}))[s] = \Phi_i(A \oplus (D \cup \set{d}))[s] 
\supseteq \Phi_i(A \oplus D)[s]
\]
for all $\alpha$-true stages $s \ge s_4$. Hence $z_x \notin \Phi_i(A \oplus 
D)$ and thus $x \notin \Delta_i(\Phi_i(A \oplus D))$. 
\end{proof}

The proof of Theorem~\ref{thm:delta2emptyintervals} is now complete.
\end{proof}

\begin{corollary}
The $\Delta^0_2$ $s$-degrees are not dense. The $\Delta^0_2$ $Q$-degrees are 
not dense. 
\end{corollary}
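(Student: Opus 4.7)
The plan is to read off both statements directly from Theorem~\ref{thm:delta2emptyintervals}. That theorem produces $\Delta^0_2$ sets $D$ and $E$ with $\deg_s(D) <_s \deg_s(E)$ and with no $s$-degree strictly between them. Hence the $\Delta^0_2$ $s$-degrees $\deg_s(D)$ and $\deg_s(E)$ witness the failure of density of the $\Delta^0_2$ $s$-degrees, since any $\Delta^0_2$ $s$-degree strictly between them would, in particular, be an $s$-degree strictly between them, which Theorem~\ref{thm:delta2emptyintervals} forbids.

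For the $Q$-degrees I would invoke the complement isomorphism recalled in the introduction, namely $A \leq_Q B \Leftrightarrow \ol{A} \leq_s \ol{B}$ (valid whenever $B \neq \omega$). This is applicable here because $\ol{E} \neq \omega$: indeed $E$ is nonempty, since otherwise $E \leq_s D$, contradicting $D <_s E$. Since the class $\Delta^0_2$ is closed under complementation, $\ol{D}$ and $\ol{E}$ are $\Delta^0_2$, and the isomorphism transfers the empty $s$-degree interval $(\deg_s(D), \deg_s(E))$ to an empty $Q$-degree interval $(\deg_Q(\ol{D}), \deg_Q(\ol{E}))$: any $Y$ with $\ol{D} <_Q Y <_Q \ol{E}$ would yield, via $\ol{Y}$, a set with $D <_s \ol{Y} <_s E$, contradicting the theorem. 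There is essentially no obstacle; the corollary is a direct translation of Theorem~\ref{thm:delta2emptyintervals} through the $s$-$Q$ correspondence, together with the trivial observation that the relevant complements remain $\Delta^0_2$.
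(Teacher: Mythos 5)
Your proposal is correct and follows exactly the route the paper intends: its proof is simply ``Immediate,'' reading the $s$-degree statement off Theorem~\ref{thm:delta2emptyintervals} and the $Q$-degree statement via the complement isomorphism $A \leq_Q B \Leftrightarrow \ol{A} \leq_s \ol{B}$ recalled in the introduction. Your added checks (e.g.\ $E \neq \emptyset$, closure of $\Delta^0_2$ under complement) are harmless elaborations of the same argument.
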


\begin{proof}
Immediate.
\end{proof}

\section{Finding the two sets in the same $e$-degree}

Finally we show that in the previous theorem, the $s$-degrees $\bs{d}$ and 
$\bs{e}$ can be built in the same $e$-degree, by showing that we can build 
$A$ and $D$ so that $A=\Omega(D)$, where $\Omega$ is a $e_2$-operator, i.e. 
an $e$-operator such that, for every number $x$ and finite set $G$, 
\[
\ang{x, G} \in \Omega \Rightarrow [\textrm{$G= \emptyset$
or $G$ has exactly two elements].}
\]

Recall that $\set{F_n}_{n \in \omega}$ is a strong disjoint array if there is 
a computable function $f$ such that for each $n$, $f(n)$ gives the canonical 
index of $F_n$ and the $F_n$ are pairwise disjoint.  

\begin{corollary}\label{cor:be}
The sets $A$ and $D$ of Theorem~\ref{thm:delta2emptyintervals} can be built 
such that $D \equiv_{e} A \oplus D$. 
\end{corollary}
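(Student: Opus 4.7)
The plan is to modify the construction of Theorem~\ref{thm:delta2emptyintervals} so as to simultaneously build an $e_2$-operator $\Omega$ witnessing $A=\Omega(D)$; combined with the trivial $D\leq_e A\oplus D$, this will give $D\equiv_e A\oplus D$, establishing the corollary.

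Partition $\omega$ as $X\sqcup T$ with $X$ and $T$ infinite and computable, and fix a strong disjoint array $\set{F_x}_{x\in X}$ of doubletons $F_x=\set{p_1(x),p_2(x)}\subseteq T$. Rerun the construction of Theorem~\ref{thm:delta2emptyintervals} verbatim, but constrain every witness, every $\gamma$-marker, and every element ever added to a stream to lie in $X$ (equivalently, redefine ``new number'' to mean ``new number in $X$'' and intersect the root stream $\st(\lambda)$ with $X$). At stage~$0$, enumerate the axiom $\ang{t,\emptyset}$ into $\Omega$ for every $t\in T$, and $\ang{x,F_x}$ for every $x\in X$. Whenever Case~2.2.3.1 or Case~2.2.3.2 of the construction extracts some $x\in X$ from~$A$, \emph{simultaneously} extract $p_1(x)$ from $D$ and forbid $p_1(x)$ ever to re-enter $D$.

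Checking $\Omega(D)=A$ is routine: every $t\in T$ lies in both sets forever; and for $x\in X$ one has $x\in\Omega(D)$ iff $F_x\subseteq D$ iff $p_1(x)\in D$ (since $p_2(x)\in T$ is never touched) iff $x$ was never extracted from $A$ iff $x\in A$. The set $D$ is still co-$2$-c.e., since each tag $p_1(x)$ leaves $D$ at most once by the new rule while elements of $X$ still obey Lemma~\ref{lem:co-2-ce}; and $A$ is still $\Pi^0_1$, since the new construction adds only finitely many extractions of elements of $X$ exactly as before (together with the extraction of all of $T$ at stage $0$, if one takes $A_0=X$, which is a harmless $\Pi^0_1$ modification).

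The main obstacle, which will dominate the detailed write-up, is verifying that every $R$- and $S$-requirement is still satisfied. The key point is that the tags live in $T$, which is disjoint from the universe of witnesses, $\gamma$-markers, and stream elements; so the new extractions never interact with the bookkeeping of any $\widehat{D}_{i,y}$ and the extraction of a tag $p_1(x)$ affects the $\Gamma$-strategies exactly like the automatic invalidation of a $\Gamma$-axiom $\ang{z,\set{p_1(x)}}$ whose $D$-use has left $D$, a phenomenon already handled by the $\Gamma$-correction mechanism of the original construction. The one genuinely new subtlety is that the restraint set $F_{x_\alpha}$ chosen by an $R$-strategy $\alpha$ in Case~2.2.3 may now be of the form $\set{p_1(x')}$; one argues, paralleling the proof of Lemma~\ref{lem:true-stage}, that for $\alpha$ on the true path and from some stage $s_0$ onward, every $x'\in X$ whose tag appears in such an $F_{x_\alpha}$ has $x'<s_0$ and is no longer eligible to be chosen as a witness by any strategy (all strategies of lower priority were initialized at stage $s_0$ and their future witnesses are new and hence larger than $p_1(x')>x'$; strategies of higher priority have already settled), so $x'$ is never extracted from $A$, whence $p_1(x')\in D$ is preserved and $\alpha$'s restraint survives.
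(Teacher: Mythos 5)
Your overall approach is the same as the paper's: build an $e_2$-operator $\Omega$ made of disjoint doubleton ``tags'' so that $A=\Omega(D)$, keep the $\gamma$-markers in a part of $\omega$ disjoint from the tags so that $\Gamma$-correction and $\Gamma$-killing never disturb $A$, and implement extraction of a witness $x$ from $A$ by removing a tag element from $D$. However, there is a genuine gap in your extraction rule. You decree that extracting $x$ from $A$ is always implemented by extracting the \emph{fixed} element $p_1(x)$ of $F_x$, and you only analyze the interaction with restraints $F_{x_\alpha}=\set{p_1(x')}$ for \emph{other} witnesses $x'$. The dangerous case is $x'=x_\alpha$: the adversary can realize the witness through its own tag. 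Concretely, consider the $s$-operator $\Phi=\set{\ang{t,\emptyset}: t\in T}\cup\set{\ang{x,\set{p_1(x)}}: x\in X}$, which occurs as some $\Phi_k$. Under your rules $p_2(x)$ is never removed from $D$ and $p_1(x)$ is never re-enumerated, so at every stage $x\in A \Leftrightarrow F_x\subseteq D \Leftrightarrow p_1(x)\in D \Leftrightarrow x\in\Phi(D)$, and likewise $T\subseteq A\cap\Phi(D)$ permanently; hence $A=\Phi(D)$ no matter what the strategies do, the requirement $R_k$ fails, and indeed $A\leq_s D$, destroying the whole theorem. At the level of a single strategy the conflict is visible in Case~2.2.3.1: the witness $x_\alpha$ is realized with $F_{x_\alpha}=\set{p_1(x_\alpha)}$ (which is $\Gamma_i$-cleared for all $i$, since tags are not $\gamma$-markers), and the strategy must simultaneously restrain $p_1(x_\alpha)\in D$ and, by your rule, extract $p_1(x_\alpha)$ from $D$.

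The fix is exactly what the paper's proof does and is the reason the tags are \emph{pairs} rather than singletons: when $x$ is to be extracted from $A$ after being realized via an axiom $\ang{x,\set{d}}\in\Phi_k$ with $d$ a tag element, extract an element of $F_x\smallsetminus\set{d}$ (nonempty since $\card{F_x}=2$), so that the realizing use is preserved and $x\in\Phi_k(D)\smallsetminus A$ is won; the choice of which tag element leaves $D$ must therefore depend on the realizing use and cannot be fixed in advance. With that change your remaining remarks (markers kept disjoint from tags, co-$2$-c.e.-ness of $D$, and the priority/initialization argument that a restrained tag of an old witness $x'$ is never later extracted) do line up with the paper's intended verification.
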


\begin{proof}
Let $\set{F_n}_{n \in \omega}$ be a strong disjoint array of nonempty finite 
sets which partition $2\omega = \set{2x : x \in \omega}$, where $\card{F_n} = 
2$ for all $n$ (the symbol $\card{X}$ denotes the cardinality of any given 
set $X$): for instance take $F_n=\{4n, 4n+2\}$.  Define the $e_2$ operator 
$\Omega = \set{\ang{n, F_n}:n \in \omega}$.  We construct a co-2-c.e. set $D$ 
and take $A = \Omega(D)$.  If we ever need to extract an element $n$ from 
$A$, we extract a single element $F_n$ from $D$, but never both.  

We then run the construction in the same way as in the proof of 
Theorem~\ref{thm:delta2emptyintervals} (from which we borrow terminology and 
notations), starting with $D = \omega$ (and thus $A = \omega$), with the 
following modifications. 
\begin{itemize}
\item The $\gamma$-markers must be chosen from $D \cap (2\omega +1)$ (where 
    $2\omega +1=\set{2x+1: x \in \omega}$)). This ensures that extractions 
    from $D$ (for the sake of $\Gamma$-corrections, and $\Gamma$-killing) 
    do not cause extractions from $A$. 
\item When a witness $x$ becomes realized for the $R$-requirement $A\ne 
    \Phi(D)$, then a strategy $\alpha$ for this requirement prescribes the 
    action of extracting $x$ from $A$ and restraining $x \in \Phi(D)$. If 
    $x$ has become realized via an axiom $\ang{x, \set{d} }\in \Phi$ with 
    $d$ even then we extract from $D$ exactly one of $F_x \smallsetminus 
    \set{d}$, winning the requirement as we get $x\in \Phi(D) 
    \smallsetminus A$, and $x$ is $\Gamma_i$-cleared for all $i<n$. (As 
    usual, we are assuming that $\beta_0\subset \beta_1 \subset \cdots 
    \subset \beta_{n-1}$ are the $S$-strategies which are active along 
    $\alpha$.) If $d$ is odd and $d$ is a $\gamma$-marker then the action 
    taken by the construction depends on whether $x$ is $\Gamma_i$-cleared 
    for all $i<n$, or not.  In any case, the details are the same as in the 
    proof of the previous theorem, modulo obvious adaptations. 
\end{itemize}
\end{proof}

\section{Questions}

Can it be argued that Theorem~\ref{thm:delta2emptyintervals} provides an 
almost optimal nondensity result for the local structure of the $s$-degrees? 
The sets $D$ and $E$ provided by the theorem are co-$2$-c.e.\,. We cannot go 
from co-$2$-c.e. sets to both $2$-c.e. sets, as by 
\cite[Lemma~19]{Omanadze-Sorbi:s-degrees} every $2$-c.e. set is 
$s$-equivalent to a $\Pi^0_1$ set, and Downey, La~Forte and 
Nies~\cite{Downey-Laforte-Nies:Q-degrees} proved that the $\Sigma^0_1$ 
$Q$-degrees are dense, which is equivalent to saying that the $\Pi^0_1$ 
$s$-degrees are dense. 

Therefore the following question remains open: 

\begin{questionrm}
Can one find suitable co-$2$-c.e.\, sets $D,E$ witnessing nondensity as in 
Theorem~\ref{thm:delta2emptyintervals}, but one of them is $\Pi^0_1$? 
\end{questionrm}

The construction provided in the proof of  
Theorem~\ref{thm:delta2emptyintervals} does not work of course in the 
$e$-degrees since it is known that the $\Sigma^0_2$ $e$-degrees are dense.  
So, the proof of Theorem~\ref{thm:delta2emptyintervals} makes an unavoidable 
use of the special form of the $s$-operators within the full class of the 
$e$-operators: whereas in $s$-reducibility the use of a computation $z\in 
\Phi(A\oplus D)$ comes from either $A$ or $D$, the use in $e$-reducibility 
may involve both sets, and more elements for each set. This special form of 
the $s$-operators is exploited in many spots of the construction: it appears 
for example in the definition of a setup $(z,x)$ for $\Delta_i$ as defined in 
Case~2.2.3.3 of the construction, where it can be correctly assumed that $z_x 
\in \Phi_i(A\oplus D)$ only by keeping $x\in A$, without needing to restrain 
any other number in $A$, or any number in $D$. 

It would be interesting to spot where nondensity of the $\Sigma^0_2$ degrees 
gets lost when one moves from $s$-reducibility towards $e$-reducibility by 
gradually enlarging the class of $e$-operators used for the reductions. For 
instance, Cooper (\cite[Question~\S5(e)]{Cooper:Enumeration} and 
\cite[Question~26]{Cooper:Enumeration}) asks whether the $\Sigma^{0}_{2}$ 
$be$-degrees are dense. The reducibility known as $be$-reducibility is 
defined in terms of $be$-operators: An $e$-operator $\Phi$ is a 
\emph{$be$-operator}, if there exists a number $n$, such that 
\[
(\forall x)(\forall D)\left[\ang{x, D} \in \Phi \Rightarrow \card{D}\leq n\right].
\]
Clearly every $s$-operator is a $be$-operator. The $e$-operator $\Omega$ 
built in Corollary~\ref{cor:be} is an example of a $be$-operator with $n=2$. 


\end{document}